\newcommand {\Real}{\ensuremath{{\mathbb{R}}}}
\newcommand {\Natural}{\ensuremath{{\mathbb{N}}}}
\newcommand {\Complex}{\ensuremath{{\mathbb{C}}}}
\newcommand{\setO}{\ensuremath{\mathcal O}}
\newcommand{\setP}{\ensuremath{\mathcal P}}
\newcommand{\A}{\ensuremath{\mathcal A}}
\newcommand{\setL}{\ensuremath{\mathcal L}}
\newcommand{\setS}{\ensuremath{\mathcal S}}
\newcommand{\setE}{\ensuremath{\mathcal E}}
\newcommand{\X}{\ensuremath{\mathcal X}}
\newcommand{\G}{\ensuremath{\mathcal G}}
\newcommand{\N}{\ensuremath{\mathcal N}}
\newcommand{\W}{\ensuremath{\mathcal W}}
\newcommand{\vi}{\ensuremath{{\mathbf{v}}}}
\newcommand{\ex}{\ensuremath{{\mathbf{x}}}}
\newcommand{\one}{\ensuremath{{\mathbf{1}}}}
\newcommand{\dabilyu}{\ensuremath{{\mathbf{w}}}}
\newtheorem{theorem}{Theorem}
\newtheorem{corollary}{Corollary}
\newtheorem{lemma}{Lemma}
\newtheorem{fact}{Fact}
\newtheorem{algorithm}{Algorithm}
\newtheorem{claim}{Claim}
\newtheorem{definition}{Definition}
\newtheorem{proposition}{Proposition}
\newenvironment{proof}{\noindent {\bf Proof.}}{\hfill \hspace*{1pt}\hfill$\blacksquare$}
\begin{document}
\title{Conditions for synchronizability in arrays of coupled linear systems}
\author{S. Emre Tuna\\
{\small {\tt tuna@eee.metu.edu.tr}} }
\maketitle

\begin{abstract}
Synchronization control in arrays of identical output-coupled
continuous-time linear systems is studied. Sufficiency of new
conditions for the existence of a synchronizing feedback law are
analyzed. It is shown that for neutrally stable systems that are
detectable form their outputs, a linear feedback law exists under
which any number of coupled systems synchronize provided that the
(directed, weighted) graph describing the interconnection is fixed and
connected. An algorithm generating one such feedback law is
presented. It is also shown that for critically unstable systems
detectability is not sufficient, whereas full-state coupling is, for
the existence of a linear feedback law that is synchronizing for
all connected coupling configurations.
\end{abstract}

\section{Introduction}
Synchronization is, at one hand, a desired behaviour in many dynamical
systems related to numerous technological applications
\cite{fabiny93,fax04,cortes04}; and, at the other, a
frequently-encountered phenomenon in biology
\cite{stoop00,walker69,aldridge76}. Aside from that, it is an
important system theoretical topic on its own right.  For instance,
there is nothing keeping us from seeing a simple Luenberger observer
\cite{luenberger64} with decaying error dynamics as a two-agent system
where the agents globally synchronize \cite[Ch. 5]{fradkov07}. Due to
abundance of motivation to investigate synchronization, a wealth of
literature has been formed, mostly in recent times
\cite[Sec. 5]{boccaletti06}, \cite{strogatz01,wang02}.

An essential problem in control theory is to find general conditions
that imply synchronization of a number of coupled individual
systems. This problem, which is usually studied under the name {\em
synchronization stability}, has been attacked by many and from various
angles. Two cases, which partly overlap, are of particular interest:
(i) where the dynamics of individual systems are relatively primitive
(such as that of an integrator) yet the coupling between them
satisfies but minimum regularity stipulations (e.g. time-varying,
delayed, directed, need not be connected at all times); and (ii) where
the individual systems are let be more complex, but the structure of
their interaction resides in a more stringent set (e.g. fixed,
balanced, symmetric, with coupling strength greater than some
threshold.) The studies concentrated on the first case have resulted
in the emergence of the area now known as consensus in multi-agent
systems
\cite{olfati07,moreau05,cortes07,jadbabaie03,
ren05,angeli06,blondel05,tsitsiklis86} where fairly weak conditions on
the interconnection have been established under which the states of
individual systems converge to a common point fixed in space. The
second case has also accommodated important theoretical developments
especially by the use of tools from algebraic graph theory
\cite{wu05con}. Using Lyapunov functions, it has been shown that
spectrum of the coupling matrix plays a crucial role in determining
the stability of synchronization
\cite{wu95,pecora98} notwithstanding it need not be
explicitly known \cite{belykh06}. It has also been shown that
passivity theory can be useful in studying stability provided that the
coupling is symmetric
\cite{arcak07,pogromsky01,stan07}.

As alluded to earlier, Luenberger observer makes a particular example 
for synchronization. The well-known dynamics read
\begin{eqnarray*}
\dot{x}&=&Ax\\
\dot{\hat{x}}&=&A\hat{x}+L(y-C\hat{x})
\end{eqnarray*}
where $y=Cx$ is the output of the observed system $\dot{x}=Ax$ and
$\hat{x}$ is the estimate of the actual state $x$. Taking observer
gain as a design parameter, detectability of the pair $(C,\,A)$ is
necessary and sufficient for the existence of a linear feedback law
$L$ that ensures $|x(t)-\hat{x}(t)|\to 0$ as $t\to\infty$. Once the
above dynamics are rewritten as
\begin{eqnarray*}
\left[\!\!
\begin{array}{c}
\dot{x}\\ \dot{\hat{x}}
\end{array}\!\!
\right]=\left(I\otimes A+
\left[\!\!
\begin{array}{rr}
0&0\\1&-1
\end{array}\!\!
\right]
\otimes LC\right)
\left[\!\!
\begin{array}{c}
x\\ \hat{x}
\end{array}\!\!
\right]
\end{eqnarray*}
a natural generalization of the (observer design) problem
becomes apparent. Namely, for the following system
\begin{eqnarray*}
\left[\!\!
\begin{array}{c}
{\dot{x}}_{1}\\ \vdots\\{\dot{x}}_{p}
\end{array}\!\!
\right]=\left(I\otimes A+
\Gamma
\otimes LC\right)
\left[\!\!
\begin{array}{c}
{{x}}_{1}\\ \vdots\\ {{x}}_{p}
\end{array}\!\!
\right]
\end{eqnarray*}
investigate conditions on (coupling) matrix $\Gamma$ and pair
$(C,\,A)$ under which one can find an $L$ (synchronizing feedback law)
yielding $|x_{i}(t)-x_{j}(t)|\to 0$ for all $i,\,j$ (synchronization.)
Most applications related to this question render the assumption that
coupling matrix is known too restrictive. Even the size of $\Gamma$
(hence the number of coupled systems) may sometimes be
unknown. Therefore when designing a synchronizing feedback law, one is
forced to not count on the exact knowledge of coupling matrix
(interconnection.) However, something has to be assumed and how loose
that assumption can get is yet a partly open problem. To be precise,
given a set of possible interconnections, where each element of the
set determines not only the coupling between the systems but also the
size of the array, a fundamental question is the following. What
condition on pair $(C,\,A)$ guarantees the existence of an $L$ under
which the systems synchronize for all coupling configurations that
belong to the given set?  Clearly, the choice of the set shapes the
possible answers to this question. When the set is the collection of
all interconnections with coupling strength greater than some ({\em a
priori} known) positive threshold, the answer (which can be extracted,
for instance, from \cite{wu05}) is as simple as one can expect: that
$(C,\,A)$ is detectable, just as is the case with Luenberger
observer. Unlike Luenberger observer though, mere that $A-LC$ is
Hurwitz is not sufficient for synchronization. In attempt to make the
general picture closer to complete (see Theorem~\ref{thm:main}), in
this paper we focus on the {\em set of all interconnections with
connected graphs} (no assumption on the strength of coupling nor on
the symmetry or balancedness of the graph) and study new conditions on
$(C,\,A)$ which do (not) guarantee the existence of a synchronizing
feedback law with respect to that set. The main contribution of this
paper is in establishing the following results:
\begin{itemize}
\item[(b)] If $(C,\,A)$ is detectable with $A$ neutrally stable then there 
exists a synchronizing linear feedback law. (We also provide an
algorithm to explicitly compute one such feedback law.)
\item[(c)] If $A$ is critically unstable (e.g. double integrator) 
and $C$ is full column rank then there exists a synchronizing linear feedback 
law.
\item[(f)] If $(C,\,A)$ is detectable with $A$
critically unstable then, in general, there does not exist 
a synchronizing linear feedback law.
\end{itemize}
For the discrete-time version of (b) see \cite{aut7479}. 

In the remainder of the paper we first provide basic notation and
definitions. In Section~\ref{sec:problemstatement} we define {\em
synchronizability with respect to a set of interconnections} and
formalize the problem through that definition. Following the problem
statement we present the main theorem as a 8-item list of sufficient
and nonsufficient conditions for synchronizability. After stating the
main theorem (Theorem~\ref{thm:main}) we set out to prove what it
claims. The sufficiency statements of Theorem~\ref{thm:main} are
demonstrated in Section~\ref{sec:positive} where we also show that
coupled harmonic oscillators synchronize if the coupling is via a
connected graph (Corollary~\ref{cor:harmonic}.) To the best of our
knowledge, establishing synchronization of harmonic oscillators
without any symmetry, balancedness, or strong coupling assumption on
the interconnection is new. Section~\ref{sec:negative} includes the
proofs of nonsufficiency statements of Theorem~\ref{thm:main}, which
are given in order to give a measure on how tight (close to necessary)
the sufficient conditions are. We spend a few words on a dual problem
in Section~\ref{sec:dualproblem} after which we conclude.

\section{Notation and definitions}
Let $\Natural$ denote the set of nonnegative integers and $\Real_{\geq
0}$ set of nonnegative real numbers. Let $|\cdot|$ denote 2-norm. For
$\lambda\in\Complex$ let ${\rm Re}(\lambda)$ denote the real part of
$\lambda$.  Identity matrix in $\Real^{n\times n}$ is denoted by
$I_{n}$ and zero matrix in $\Real^{m\times n}$ by $0_{m\times n}$.
Conjugate transpose of a matrix $A$ is denoted by $A^{H}$. A matrix
$A\in\Complex^{n\times n}$ is {\em Hurwitz} if all of its eigenvalues
have strictly negative real parts. A matrix $S\in\Real^{n\times n}$ is
{\em skew-symmetric} if $S+S^{T}=0$. For $A\in\Real^{n\times n}$ and
$C\in\Real^{m\times n}$, pair $(C,\,A)$ is {\em detectable} (in the
continuous-time sense) if that $Ce^{At}x=0$ for some $x\in\Real^{n}$
and for all $t\geq 0$ implies $\lim_{t\to\infty}e^{At}x=0$. Matrix
$A\in\Real^{n\times n}$ is {\em neutrally stable} (in the
continuous-time sense) if it has no eigenvalue with positive real part
and the Jordan block corresponding to any eigenvalue on the imaginary
axis is of size one. Let $\one\in\Real^{p}$ denote the vector with all
entries equal to one.

{\em Kronecker product} of $A\in\Real^{m\times n}$ and $B\in\Real^{p\times q}$ is
\begin{eqnarray*}
A\otimes B:=
\left[\!\!
\begin{array}{ccc}
a_{11}B & \cdots & a_{1n}B\\
\vdots  & \ddots & \vdots\\
a_{m1}B & \cdots & a_{mn}B
\end{array}\!\!
\right]
\end{eqnarray*}
Kronecker product comes with the properties $(A\otimes
B)(C\otimes D)=(AC)\otimes(BD)$ (provided that products $AC$ and $BD$
are allowed) $A\otimes B+A\otimes C=A\otimes(B+C)$ (for $B$ and
$C$ that are of same size) and $(A\otimes B)^{T}=A^{T}\otimes B^{T}$. 

A ({\em directed}) {\em graph} is a pair $(\N,\,\setE)$ where $\N$ is a
nonempty finite set (of {\em nodes}) and $\setE$ is a finite collection
of ordered pairs ({\em edges}) $(n_{i},\,n_{j})$ with
$n_{i},\,n_{j}\in\setE$. A {\em path} from $n_{1}$ to $n_{\ell}$ is a
sequence of nodes $(n_{1},\,n_{2},\,\ldots,\,n_{\ell})$ such that
$(n_{i},\,n_{i+1})$ is an edge for $i\in\{1,\,2,\,\ldots,\,\ell-1\}$. A
graph is {\em connected} if it has a node to which there exists a path
from every other node.\footnote{Note that this definition of
connectedness for directed graphs is weaker than strong connectivity
and stronger than weak connectivity.} Fig.~\ref{fig:graph} illustrates
two graphs, where one is connected and the other is not. 

\begin{figure}[h]
\begin{center}
\includegraphics[scale=0.65]{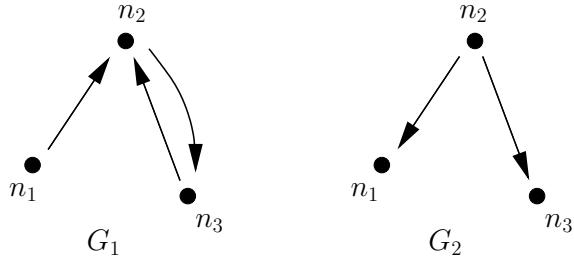}
\caption{Schematic representations of two graphs $G_{1}=(\N,\,\setE_{1})$ and 
$G_{2}=(\N,\,\setE_{2})$ where $\N=\{n_{1},\,n_{2},\,n_{3}\}$,
$\setE_{1}=\{(n_{1},\,n_{2}),\,(n_{2},\,n_{3}),\,(n_{3},\,n_{2})\}$ and
$\setE_{2}=\{(n_{2},\,n_{1}),\,(n_{2},\,n_{3})\}$. $G_{1}$ is connected,
$G_{2}$ is not.  }\label{fig:graph}
\end{center}
\end{figure}

A matrix $\Gamma:=[\gamma_{ij}]\in\Real^{p\times p}$ describes (is) an
{\em interconnection} if $\gamma_{ij}\geq 0$ for $i\neq j$ and
$\gamma_{ii}=-\sum_{j\neq i}\gamma_{ij}$. It immediately follows that
$\lambda=0$ is an eigenvalue with eigenvector $\one$
(i.e. $\Gamma\one=0$.) The graph of $\Gamma$ is the pair $(\N,\,\setE)$
where $\N =\{n_{1},\,n_{2},\,\ldots,\,n_{p}\}$ and
$(n_{i},\,n_{j})\in\setE$ iff $\gamma_{ij}>0$. Interconnection $\Gamma$
is said to be {\em connected} if its graph is connected.

For connected $\Gamma$, eigenvalue $\lambda=0$ is distinct and all the
other eigenvalues have real parts strictly negative.
When we write ${\rm Re}(\lambda_{2}(\Gamma))$ we mean the real part of
a nonzero eigenvalue of $\Gamma$ closest to the imaginary axis. Let
$r^{T}$ be the left eigenvector of eigenvalue $\lambda=0$
(i.e. $r^{T}\Gamma=0$) with $r^{T}\one=1$. Then $r^{T}$ is unique and
satisfies $\lim_{t\to\infty}e^{\Gamma{t}}={\one}r^{T}$.

Given maps $\xi_{i}:\Real_{\geq 0}\to\Real^{n}$ for
$i=1,\,2,\,\ldots,\,p$ and a map $\bar\xi:\Real_{\geq
0}\to\Real^{n}$, the elements of the set
$\{\xi_{i}(\cdot):i=1,\,2,\,\ldots,\,p\}$ are said to {\em synchronize
to} $\bar{\xi}(\cdot)$ if $|\xi_{i}(t)-\bar\xi(t)|\to 0$ as
$t\to\infty$ for all $i$. The elements of the set
$\{\xi_{i}(\cdot):i=1,\,2,\,\ldots,\,p\}$ are said to synchronize
if they synchronize to some $\bar{\xi}(\cdot)$. 

Let $\setP$ denote the set of all pairs $(C,\,A)$ where matrix
$C$ and square matrix $A$, both real, have the same number of
columns. We define the following subsets of $\setP$.
\begin{itemize}
\item $\A_{\rm H}$: set of all pairs $(C,\,A)$ with $A$ Hurwitz.
\item $\A_{\rm N}$: set of all pairs $(C,\,A)$ with $A$ neutrally stable. 
\item $\A_{\rm J}$: set of all pairs $(C,\,A)$ with $A$ having no 
eigenvalue with positive real part.
\item $\setO_{\rm F}$: set of all pairs $(C,\,A)$ with $C$ full column rank. 
\item $\setO_{\rm P}$: set of all detectable pairs. 
\end{itemize}
Few remarks are in order regarding the above definitions. Note that
$\A_{\rm H}\subset\A_{\rm N}\subset\A_{\rm J}$ and $\setO_{\rm
F}\subset\setO_{\rm P}$. Set $\A_{\rm J}$ allows $A$ matrices with
{\em Jordan} blocks of arbitrary size with eigenvalues on the
imaginary axis. (Hence the subscript J.) Therefore there are pairs
$(C,\,A)$ in $\A_{\rm J}$ with critically unstable $A$ matrices. For a
pair $(C,\,A)\in\setO_{\rm F}$ if $C$ is an output matrix of some
system of order $n$ then there exists matrix $L$ such that $LC=I_{n}$,
i.e. {\em full} state information is instantly available at the
output. (Hence the subscript F.) For an arbitrary
$(C,\,A)\in\setO_{\rm P}$, however, the state information is only
{\em partially} available at the output. (Hence the subscript P.) We also
need the following definitions.
\begin{itemize}
\item $\G_{\geq\delta}$: set of all connected interconnections with 
$|{\rm Re}(\lambda_{2}(\Gamma))|\geq \delta>0$. 
\item $\G_{>0}$: set of all connected interconnections.
\item $\G_{\geq 0}$: set of all interconnections. 
\end{itemize}
Note that $\G_{\geq\delta}\subset\G_{>0}\subset\G_{\geq 0}$.

\section{Problem statement and main theorem}\label{sec:problemstatement}  
For a given interconnection $\Gamma\in\Real^{p\times p}$, let an array of
$p$ linear systems be
\begin{subeqnarray}\label{eqn:system}
{\dot{x}}_{i} &=& Ax_{i}+u_{i} \label{eqn:a}\\
 y_{i} &=& Cx_{i}\label{eqn:b}\\
 z_{i} &=& \displaystyle \sum_{j\neq i}\gamma_{ij}(y_{j}-y_{i})\label{eqn:c}
\end{subeqnarray}
where $x_{i}\in\Real^{n}$ is the {\em state}, $u_{i}\in\Real^{n}$ is
the {\em input}, $y_{i}\in\Real^{m}$ is the {\em output}, and
$z_{i}\in\Real^{m}$ is the {\em coupling} of the $i$th system for
$i=1,\,2,\,\ldots,\,p$. Matrices $A$ and $C$ are of proper dimensions.
The solution of $i$th system at time $t\geq 0$ is denoted by
$x_{i}(t)$. 

\begin{definition}
Given $A\in\Real^{n\times n}$, $C\in\Real^{m\times n}$, and set of
interconnections $\setS$; pair $(C,\,A)$ is said to be {\em
synchronizable with respect to $\setS$} if there exists a linear feedback
law $L\in\Real^{n\times m}$ such that for each $\Gamma\in\setS$ solutions
$x_{i}(\cdot)$ of array~\eqref{eqn:system} with $u_{i}=Lz_{i}$
synchronize for all initial conditions.
\end{definition}
\begin{center}
{\em When is pair $(C,\,A)$ synchronizable with respect to
interconnection set $\G_{> 0}$?}
\end{center}

The above question is what we are mainly concerned with in this paper. Our
objective is to find sufficient conditions on $(C,\,A)$ for
synchronizability with respect to set of all connected
interconnections and examine their slackness. To this end, we adopt a
constructive approach, i.e. we compute a synchronizing feedback law
$L$ whenever possible. We also provide the explicit trajectory that
the solutions synchronize to (if they do synchronize.)  By adding our
findings to already known results on synchronization of coupled linear
systems we aim to reach a clearer picture of the problem with fewer
missing pieces, which is recapitulated in the below theorem.

\begin{theorem}\label{thm:main}
Let us be given $\delta>0$. We have the following.
\begin{itemize}
\item[(a)] All $(C,\,A)\in\A_{\rm H}$ are synchronizable with respect to $\G_{\geq 0}$. 
\item[(b)] All $(C,\,A)\in\A_{\rm N}\cap\setO_{\rm P}$ are synchronizable with respect to $\G_{>0}$.
\item[(c)] All $(C,\,A)\in\A_{\rm J}\cap\setO_{\rm F}$ are synchronizable with respect to $\G_{>0}$.
\item[(d)] All $(C,\,A)\in\setO_{\rm P}$ are synchronizable with respect to $\G_{\geq \delta}$.   
\item[(e)] Not all $(C,\,A)\in\A_{\rm N}\cap\setO_{\rm F}$ are synchronizable 
with respect to $\G_{\geq 0}$.
\item[(f)] Not all $(C,\,A)\in\A_{\rm J}\cap\setO_{\rm P}$ are synchronizable with respect to $\G_{>0}$.
\item[(g)] Not all $(C,\,A)\in\setO_{\rm F}$ are synchronizable with respect to $\G_{>0}$.   
\item[(h)] Not all $(C,\,A)\in\setP$ are synchronizable with respect to $\G_{\geq\delta}$.   
\end{itemize}
\end{theorem}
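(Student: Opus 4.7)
The theorem collects four sufficient statements (a)--(d) and four non-sufficient ones (e)--(h), so the proof naturally splits into constructing a synchronizing $L$ in the first four items and exhibiting obstructions in the last four. I expect essentially all the new work to concentrate in item (b); the remaining positive items are either immediate or semi-classical, and the negative items yield to small explicit counterexamples.

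\textbf{Easy positive cases.} For (a), take $L=0$; then $\dot x_i=Ax_i$ for every $i$ and each $x_i(t)\to 0$ regardless of $\Gamma$, so the solutions synchronize to the zero trajectory. For (c), full column rank of $C$ gives an $L$ with $LC=I_n$, and the closed-loop array becomes $\dot x=(I_p\otimes A+\Gamma\otimes I_n)x$; its spectrum is $\{\mu+\lambda:\mu\in\sigma(A),\,\lambda\in\sigma(\Gamma)\}$, and for every nonzero $\lambda\in\sigma(\Gamma)$ one has $\mathrm{Re}(\mu+\lambda)\le\mathrm{Re}(\lambda)<0$ because $\mathrm{Re}(\mu)\le 0$; the zero eigenvalue of $\Gamma$ carries the (possibly polynomially growing) common trajectory $\bar\xi(t)=e^{At}(r^T\otimes I_n)x(0)$. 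For (d), detectability yields $L_0$ with $A+L_0C$ Hurwitz; a suitable scaling $L=L_0/\delta$ combined with the observation that $\sigma(I\otimes A+\Gamma\otimes LC)=\bigcup_{\lambda\in\sigma(\Gamma)}\sigma(A+\lambda LC)$ gives synchronization uniformly over $\G_{\geq\delta}$, since $|\mathrm{Re}(\lambda)|\ge\delta$ forces each such spectrum into the open left half-plane.

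\textbf{Item (b), the substantive case.} Neutral stability of $A$ furnishes $P=P^T>0$ with $A^TP+PA\le 0$; I would take $L=P^{-1}C^T$ and prove synchronization for an arbitrary connected $\Gamma$. Bringing $\Gamma$ into Jordan form $\Gamma=TJT^{-1}$ and passing to $\tilde x=(T^{-1}\otimes I_n)x$, the block at the simple eigenvalue $\lambda_1=0$ evolves as $\dot{\tilde x}_1=A\tilde x_1$ and, because $\lim_{t\to\infty}e^{\Gamma t}=\one r^T$, pins down the synchronization trajectory $\bar\xi(t)=e^{At}(r^T\otimes I_n)x(0)$; each remaining Jordan block at a nonzero $\lambda$ (with $\mathrm{Re}(\lambda)<0$) has diagonal dynamics $\dot z=(A+\lambda LC)z$. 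The crux is to show that $A+\lambda LC$ is Hurwitz for every $\lambda\in\Complex$ with $\mathrm{Re}(\lambda)<0$. For this I would use the complex quadratic $V(z)=z^HPz$, for which the choice $L=P^{-1}C^T$ yields
\[
\dot V=z^H(A^TP+PA)z+2\,\mathrm{Re}(\lambda)\,|Cz|^2\le 2\,\mathrm{Re}(\lambda)\,|Cz|^2\le 0,
\]
and then LaSalle together with detectability of $(C,A)$ promotes $\dot V\le 0$ to $z(t)\to 0$: on the $\omega$-limit set $Cz(t)\equiv 0$ (since $\mathrm{Re}(\lambda)<0$ strictly), hence $\dot z=Az$ with $Cz\equiv 0$, which by detectability forces $z\to 0$. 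The main obstacle is precisely this asymptotic-stability step, because $A^TP+PA$ is only semidefinite and strictness has to be extracted from detectability uniformly in the complex parameter $\lambda$; a standard Jordan-chain cascade argument then propagates decay of the diagonal to decay of the whole block and gives $\tilde x_k\to 0$ for $k\ge 2$, i.e.\ $x_i(t)-\bar\xi(t)\to 0$.

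\textbf{Negative items.} For (e), take the $2\times 2$ harmonic oscillator $A=\bigl[\begin{smallmatrix}0&1\\-1&0\end{smallmatrix}\bigr]$ with $C=I_2$ (so $(C,A)\in\A_{\mathrm N}\cap\setO_{\mathrm F}$) and the interconnection $\Gamma=0_{p\times p}\in\G_{\geq 0}$; the systems decouple and distinct initial phases persist for every $L$. For (f), use the Jordan block $A=\bigl[\begin{smallmatrix}0&1\\0&0\end{smallmatrix}\bigr]$ (in $\A_{\mathrm J}$) with $C=[1\ 0]$ (detectable); for any $L=(\ell_1,\ell_2)^T$ the eigenvalues of $A+\lambda LC$ are $\tfrac{1}{2}(\lambda\ell_1\pm\sqrt{(\lambda\ell_1)^2+4\lambda\ell_2})\approx\pm\sqrt{\lambda\ell_2}$ as $|\lambda|\to 0$, so a connected $\Gamma$ (e.g.\ a weighted directed cycle) with $\lambda_2$ on the appropriate side of the imaginary axis sends one branch into the right half-plane. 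For (g), the scalar pair $A=1$, $C=1$ suffices: given any $L\in\Real$, a connected $\Gamma$ with $|\lambda_2|$ small enough makes $\mathrm{Re}(1+L\lambda_2)>0$. For (h), exploit non-detectability: pick $v\ne 0$ with $Av=\mu v$, $Cv=0$ and $\mathrm{Re}(\mu)\ge 0$; initial conditions $x_i(0)=\alpha_iv$ satisfy $Cx_i(t)\equiv 0$ and so $x_i(t)=\alpha_ie^{\mu t}v$ regardless of $L$ and $\Gamma\in\G_{\geq\delta}$, and differences do not converge.
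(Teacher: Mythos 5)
Your items (a), (c), (e), (g), (h) are fine, and your item (b) is correct but follows a genuinely different route from the paper, which I comment on below. There is, however, a real gap in item (d).

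\textbf{The gap in (d).} You claim that any output-injection gain $L_{0}$ with $A+L_{0}C$ Hurwitz, rescaled to $L=L_{0}/\delta$, makes $A+\lambda LC$ Hurwitz for every $\lambda$ with $\mathrm{Re}(\lambda)\leq-\delta$. This is false: rescaling by a real constant normalizes the real part of $\lambda$ but does nothing about its imaginary part, which is unbounded over $\G_{\geq\delta}$. Concretely, take the double integrator $A=\bigl[\begin{smallmatrix}0&1\\0&0\end{smallmatrix}\bigr]$, $C=[1\ \,0]$, $L_{0}=[1\ \,1]^{T}$, so that $A-L_{0}C$ is Hurwitz ($s^{2}+s+1$). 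For $\mu=-1+jb$ the characteristic polynomial of $A+\mu L_{0}C$ is $s^{2}-\mu s-\mu$, and an expansion for large $b$ shows one root equals $-\mu+1+1/\mu+O(|\mu|^{-2})$, whose real part is $1/(1+b^{2})>0$. Eigenvalues of this shape (after positive scaling) are realized by connected interconnections in $\G_{\geq\delta}$ --- e.g.\ the scaled directed cycles $r\Gamma_{p}$ with eigenvalue $r(e^{j2\pi/p}-1)$, whose imaginary-to-real ratio grows like $p$ while $r$ is chosen so that the real part is $\leq-\delta$. So an arbitrary stabilizing $L_{0}$ does not work; the paper's introduction explicitly warns that ``mere that $A-LC$ is Hurwitz is not sufficient.'' The missing idea is that the gain must have an \emph{unbounded (half-plane) stability region} in $\lambda$; the paper obtains this from the Riccati gain $L=\max\{1,\delta^{-1}\}PC^{T}$ with $AP+PA^{T}+I_{n}-PC^{T}CP=0$, for which the complex Lyapunov identity $(A-(\sigma+j\omega)PC^{T}C)P+P(A-(\sigma+j\omega)PC^{T}C)^{H}=-I_{n}-(1+2(\sigma-1))PC^{T}CP$ shows Hurwitzness for all $\sigma\geq1$, $\omega\in\Real$ (Claim~\ref{clm:lqr}). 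Your argument needs to be replaced by this (or an equivalent) construction.

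\textbf{Comparison on (b).} Your argument for (b) is correct and is not the paper's. You put the Lyapunov function in the $A$-direction: choose $P=P^{T}>0$ with $A^{T}P+PA\leq0$ (available since $A$ is neutrally stable), set $L=P^{-1}C^{T}$, and show $A+\lambda LC$ is Hurwitz for each fixed $\lambda$ with $\mathrm{Re}(\lambda)<0$ via $\dot V=z^{H}(A^{T}P+PA)z+2\,\mathrm{Re}(\lambda)|Cz|^{2}\leq0$ plus LaSalle and detectability; block-triangularizing over the Jordan form of $\Gamma$ then finishes, with the simple zero eigenvalue carrying $\bar x(t)=(r^{T}\otimes e^{At})x(0)$. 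The paper instead puts the Lyapunov function in the $\Gamma$-direction ($\widehat P$ built from the Lyapunov solution for the Hurwitz matrix $\Gamma-\one r^{T}$), proves the skew-symmetric full-state-of-the-marginal-modes case by LaSalle on the whole array (Proposition~\ref{prop:lyap}), and treats the Hurwitz modes by a variation-of-constants estimate; its gain $L=UP^{-1}(CU)^{T}$ uses the Ces\`aro-limit $P$ of Fact~\ref{fact:one} restricted to the imaginary-axis eigenspace. Your route is shorter and reduces everything to the pointwise ``master stability'' test; the paper's buys a self-contained array-level argument (and the harmonic-oscillator corollary) at the cost of more machinery. One small remark on (f): to turn your asymptotic $\pm\sqrt{\lambda\ell_{2}}$ into a contradiction you still need connected interconnections whose nonzero eigenvalues have argument arbitrarily close to $\pi/2$; the paper supplies these explicitly via the directed cycles $\Gamma_{p}$, and you should do the same (also disposing of the degenerate cases $\ell_{2}\leq0$).
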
  

As mentioned earlier, statement (d) has been known for a
while. General results pertaining to that case go as far back as
\cite{wu95}. Among the remaining statements, (a), (e), (g), and (h) are 
straightforward if not obvious. Still, for the sake of completeness, 
we establish each of the eight statements of Theorem~\ref{thm:main} by a series 
of lemmas. 

\section{Cases when synchronizing feedback exists}\label{sec:positive}
This section is dedicated to prove the existence statements of
Theorem~\ref{thm:main}, which are analysis results. However, what is
in here is more than mere showing the existence of some synchronizing
feedback law $L$ under different assumptions. For each case we
actually propose/compute an $L$. We also explicitly provide the
trajectory that systems will synchronize to under the proposed
feedback law. Therefore what will have been solved at the end is a
synthesis problem.
 
\subsection{System matrix Hurwitz} 
Let us be given some interconnection $\Gamma$. Consider
\eqref{eqn:system} under the feedback connection $u_{i}=Lz_{i}$ 
for some $L$.  When system matrices $A$ are Hurwitz, the trivial
choice $L=0$ decouples the systems and $x_{i}(t)\to 0$ as $t\to\infty$
for all initial conditions. Therefore synchronization accrues.  We
formalize this trivial observation to the following result and, consequently,
establish Theorem~\ref{thm:main}(a).

\begin{lemma}
Given $(C,\,A)\in\A_{\rm H}$, let $L:=0$. Then for all
$\Gamma\in\G_{\geq 0}$ solutions $x_{i}(\cdot)$ of
systems~\eqref{eqn:system} with $u_{i}=Lz_{i}$ synchronize to
$\bar{x}(t)\equiv 0$.
\end{lemma}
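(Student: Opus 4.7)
The plan is straightforward because the choice $L=0$ completely decouples the array. First I would substitute $L=0$ into the feedback law $u_i = L z_i$, which gives $u_i \equiv 0$ regardless of the interconnection $\Gamma \in \G_{\geq 0}$ or the outputs $y_j$. Consequently, the closed-loop equations (\ref{eqn:a})--(\ref{eqn:c}) reduce to the uncoupled system $\dot{x}_i = A x_i$ for each $i \in \{1, 2, \ldots, p\}$, with solution $x_i(t) = e^{At} x_i(0)$.

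Next I would invoke the defining property of $\A_{\rm H}$: since $(C,A) \in \A_{\rm H}$, the matrix $A$ is Hurwitz, so $e^{At} \to 0$ as $t \to \infty$. Therefore $|x_i(t)| \to 0$ for every $i$ and every initial condition $x_i(0) \in \Real^n$. Setting $\bar{x}(t) \equiv 0$, this is exactly the statement $|x_i(t) - \bar{x}(t)| \to 0$ as $t \to \infty$ for all $i$, which by the definition in Section~2 means that the solutions synchronize to $\bar{x}(\cdot)$.

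There is essentially no obstacle here; the lemma is a trivial consequence of Hurwitzness of $A$, and the point of writing it down is simply to record the observation formally and thereby discharge Theorem~\ref{thm:main}(a). The only thing worth being careful about is that the statement is quantified over \emph{all} $\Gamma \in \G_{\geq 0}$ (not merely the connected ones), but since $L=0$ annihilates the coupling term uniformly in $\Gamma$, the argument does not depend on connectedness, balancedness, or even on $\Gamma$ being an interconnection at all.
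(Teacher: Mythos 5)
Your proposal is correct and is essentially the paper's own argument: the paper simply notes (in the text preceding the lemma, without a formal proof) that $L=0$ decouples the systems and Hurwitzness of $A$ drives each $x_i(t)$ to $0$, so all solutions synchronize to $\bar{x}(t)\equiv 0$. Your write-up just records the same observation in slightly more detail.
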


\subsection{System matrix neutrally stable} 
The second statement of Theorem~\ref{thm:main} is concerned with
neutrally stable system matrices. Here we choose to first study a
subcase (Proposition~\ref{prop:lyap}) where the system matrices are
skew-symmetric and then generalize what we obtain. For some
interconnection $\Gamma=[\gamma_{ij}]$ consider the following coupled
systems
\begin{eqnarray}\label{eqn:skew}
\dot\xi_{i}=S\xi_{i}+H^{T}H\sum_{j=1}^{p}\gamma_{ij}(\xi_{j}-\xi_{i})\, ,\quad i=1,\,2,\,\ldots,\,p
\end{eqnarray}
where $\xi_{i}\in\Real^{n}$ is the state of the $i$th system,
$S\in\Real^{n\times n}$, and $H\in\Real^{m\times n}$. We make the
following assumptions on systems~\eqref{eqn:skew} which will
henceforth hold.

\noindent
{\bf (A1)}\ $S$ is skew-symmetric.\\
{\bf (A2)}\ $(H,\,S)$ is observable.\\
{\bf (A3)}\ $\Gamma$ is connected.

\begin{proposition}\label{prop:lyap}
Consider systems~\eqref{eqn:skew}. Solutions
$\xi_{i}(\cdot)$ synchronize to
\begin{eqnarray*}
\bar{\xi}(t):=({r^{T}\otimes e^{St}})
\left[\!\!
\begin{array}{c}
\xi_{1}(0)\\
\vdots\\
\xi_{p}(0)
\end{array}
\!\!
\right]
\end{eqnarray*}
where $r\in\Real^{p}$ is such
that $r^{T}\Gamma=0$ and $r^{T}\one=1$.
\end{proposition}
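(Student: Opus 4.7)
The plan is to stack the subsystems, observe that the target trajectory $\bar\xi$ is the weighted-$r$ average of the $\xi_i$ and solves $\dot{\bar\xi}=S\bar\xi$, and then decompose the remaining modes via a (complex) Jordan form of $\Gamma$. First I would rewrite \eqref{eqn:skew} in stacked form as $\dot\xi=(I_{p}\otimes S+\Gamma\otimes H^{T}H)\xi$ with $\xi=[\xi_{1}^{T},\ldots,\xi_{p}^{T}]^{T}$. Premultiplying by $r^{T}\otimes I_{n}$ and using $r^{T}\Gamma=0$ together with the mixed-product rule gives
\begin{eqnarray*}
\frac{d}{dt}\bigl[(r^{T}\otimes I_{n})\xi\bigr]
=(r^{T}\otimes S)\xi+(r^{T}\Gamma\otimes H^{T}H)\xi
=S\bigl[(r^{T}\otimes I_{n})\xi\bigr],
\end{eqnarray*}
so $(r^{T}\otimes I_{n})\xi(t)=(r^{T}\otimes e^{St})\xi(0)=\bar\xi(t)$. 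Hence $\bar\xi$ is the weighted average $\sum_{i}r_{i}\xi_{i}(t)$; what remains is to show each $\xi_{i}(t)$ converges to it.

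Next I would use that connectedness of $\Gamma$ makes $0$ a simple eigenvalue (with right eigenvector $\one$ and left eigenvector $r^{T}$) while all other eigenvalues have strictly negative real part. I would pick $V\in\Complex^{p\times p}$ bringing $\Gamma$ to Jordan form $V^{-1}\Gamma V=J$ with first column of $V$ equal to $\one$, first row of $V^{-1}$ equal to $r^{T}$, and $J_{11}=0$. In coordinates $\tilde\xi:=(V^{-1}\otimes I_{n})\xi$ the stacked ODE becomes $\dot{\tilde\xi}=(I_{p}\otimes S+J\otimes H^{T}H)\tilde\xi$, which is block upper-triangular in the $p$-indexing: the top block decouples into $\dot{\tilde\xi}_{1}=S\tilde\xi_{1}$ with $\tilde\xi_{1}(0)=(r^{T}\otimes I_{n})\xi(0)$, so $\tilde\xi_{1}(t)\equiv\bar\xi(t)$. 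For each $k\ge 2$ the block obeys
\begin{eqnarray*}
\dot{\tilde\xi}_{k}=(S+\mu_{k}H^{T}H)\tilde\xi_{k}+w_{k}(t),
\end{eqnarray*}
where $\mu_{k}$ is an eigenvalue of $\Gamma$ with ${\rm Re}(\mu_{k})<0$ and $w_{k}$ is a combination of $H^{T}H\tilde\xi_{j}$ for $j>k$ (nonzero only across Jordan superdiagonals).

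The crux — and the step I expect to be the hardest — is showing that $A_{k}:=S+\mu_{k}H^{T}H$ is Hurwitz. Given $A_{k}v=\lambda v$ with $v\ne 0$, skew-symmetry of $S$ makes $v^{H}Sv$ purely imaginary, so taking real parts of $\lambda|v|^{2}=v^{H}Sv+\mu_{k}|Hv|^{2}$ yields ${\rm Re}(\lambda)|v|^{2}={\rm Re}(\mu_{k})|Hv|^{2}\le 0$; equality would force $Hv=0$ and $Sv=\lambda v$, whence $He^{St}v\equiv 0$, which by observability of $(H,S)$ would give $v=0$. So ${\rm Re}(\lambda)<0$ and $A_{k}$ is Hurwitz. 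It is precisely here that assumptions (A1) and (A2) cooperate.

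A reverse induction from $k=p$ down to $k=2$ will then finish the argument: $w_{p}\equiv 0$ and $A_{p}$ Hurwitz give $\tilde\xi_{p}\to 0$ exponentially; for smaller $k$ the inductive hypothesis makes $w_{k}$ exponentially decaying, and a Hurwitz linear system with exponentially decaying input has an exponentially decaying solution. Finally, $\xi=(V\otimes I_{n})\tilde\xi$ together with $V_{i1}=1$ for all $i$ gives
\begin{eqnarray*}
\xi_{i}(t)=\bar\xi(t)+\sum_{k\ge 2}V_{ik}\,\tilde\xi_{k}(t)\;\longrightarrow\;\bar\xi(t),
\end{eqnarray*}
which is the claimed synchronization of $\{\xi_{i}(\cdot)\}$ to $\bar\xi(\cdot)$.
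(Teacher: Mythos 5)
Your proof is correct, but it follows a genuinely different route from the paper's. The paper never diagonalizes $\Gamma$: it shows $\Gamma-\one r^{T}$ is Hurwitz, solves the Lyapunov equation $(\Gamma-\one r^{T})^{T}P+P(\Gamma-\one r^{T})=-Q$, projects to get $\widehat{P}=(I_{p}-\one r^{T})^{T}P(I_{p}-\one r^{T})$ satisfying $\Gamma^{T}\widehat{P}+\widehat{P}\Gamma=-\widehat{Q}$, and uses $V(\ex)=\ex^{T}(\widehat{P}\otimes I_{n})\ex$ with $\dot V=-\ex^{T}(\widehat{Q}\otimes H^{T}H)\ex\le 0$; it then builds a compact forward-invariant set and invokes LaSalle's invariance principle, with observability of $(H,S)$ used to pin down the invariant set inside $\{\dot V=0\}$. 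You instead pass to the Jordan form of $\Gamma$ (using simplicity of the zero eigenvalue to split off the $\bar\xi$ mode exactly as in the paper's proof of Theorem~\ref{thm:main}(d)) and reduce everything to the spectral fact that $S+\mu H^{T}H$ is Hurwitz for ${\rm Re}(\mu)<0$, which you prove by a PBH-type argument: ${\rm Re}(\lambda)|v|^{2}={\rm Re}(\mu)|Hv|^{2}$ since $v^{H}Sv$ is purely imaginary, and equality forces $Hv=0$, $Sv=\lambda v$, hence $v=0$ by observability. That lemma is correct, and the block-upper-triangular structure plus your reverse induction (or simply the observation that a block-triangular matrix with Hurwitz diagonal blocks is Hurwitz, after absorbing $I_{p}\otimes S$'s interaction with the superdiagonal couplings as exponentially decaying inputs) closes the argument. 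Your approach buys an explicit exponential convergence rate and avoids LaSalle entirely; the paper's Lyapunov construction buys coordinate-free bounds that it reuses in the proof of Lemma~\ref{lem:b}, where Proposition~\ref{prop:lyap} is cited to get uniform boundedness and convergence of the time-varying transition matrix $\Phi(t,\tau)$ --- your version would serve that purpose equally well. Both proofs use assumptions (A1)--(A3) in essentially the same roles.
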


\begin{proof}
Consider matrix $\Gamma-\one{r}^{T}$. Observe that
$(\Gamma-\one{r}^{T})^{k}=\Gamma^{k}+(-1)^{k}\one{r^{T}}$ for $k\in\Natural$. For
$t\in\Real$ therefore we can write
\begin{eqnarray*}
e^{(\Gamma-1r^{T})t}
&=&I_{p}+t(\Gamma-1r^{T})+\frac{t^{2}}{2}(\Gamma-1r^{T})^{2}+\ldots\\
&=&\left(I_{p}+t\Gamma+\frac{t^{2}}{2}\Gamma^{2}+\ldots\right)
-\left(t\one{r^{T}}-\frac{t^{2}}{2}\one{r}^{T}+\ldots\right)\\
&=&e^{\Gamma{t}}-(1-e^{-t})\one{r^{T}}\,.
\end{eqnarray*} 
Consequently $\lim_{t\to\infty}e^{(\Gamma-\one{r^{T}})t}=0$. We deduce
therefore that $\Gamma-\one{r^{T}}$ is Hurwitz. Since
$\Gamma-\one{r^{T}}$ is Hurwitz, there exist symmetric positive
definite matrices $P,\,Q\in\Real^{p\times p}$ such that
\begin{eqnarray}\label{eqn:prepost}
-Q=(\Gamma-\one{r^{T}})^{T}P+P(\Gamma-\one{r^{T}})\,.
\end{eqnarray}
Define positive semidefinite matrices
$\widehat{P}:=(I_{p}-\one{r^{T}})^{T}P(I_{p}-\one{r^{T}})$ and
$\widehat{Q}:=(I_{p}-\one{r^{T}})^{T}Q(I_{p}-\one{r^{T}})$. Now pre-
and post-multiply equation \eqref{eqn:prepost} by
$(I_{p}-\one{r^{T}})^{T}$ and $(I_{p}-\one{r^{T}})$, respectively. We
obtain
\begin{eqnarray*}
-\widehat{Q}
&=&(I_{p}-\one{r^{T}})^{T}(\Gamma-\one{r^{T}})^{T}P(I_{p}-\one{r^{T}})\nonumber\\
&&\qquad+(I_{p}-\one{r^{T}})^{T}P(\Gamma-\one{r^{T}})(I_{p}-\one{r^{T}})\nonumber\\
&=&\Gamma^{T}P(I_{p}-\one{r^{T}})+(I_{p}-\one{r^{T}})^{T}P\Gamma\nonumber\\
&=&\Gamma^{T}(I_{p}-\one{r^{T}})^{T}P(I_{p}-\one{r^{T}})
+(I_{p}-\one{r^{T}})^{T}P(I_{p}-\one{r^{T}})\Gamma\nonumber\\
&=&\Gamma^{T}\widehat{P}+\widehat{P}\Gamma\,.
\end{eqnarray*}
We now stack the individual system states to obtain
$\ex:=[\xi_{1}^{T}\ \xi_{2}^{T}\ \cdots\ \xi_{p}^T]^{T}$. We can then cast
\eqref{eqn:skew} into
\begin{eqnarray}\label{eqn:wrt}
\dot{\ex}=(I_{p}\otimes{S}+\Gamma\otimes{H^{T}H})\ex\,.
\end{eqnarray} 
Define $V:\Real^{pn}\to\Real_{\geq 0}$ as $V(\ex):=\ex^{T}(\widehat{P}\otimes I_{n})\ex$. 
Differentiating $V(\ex(t))$ with respect to time we obtain
\begin{eqnarray}\label{eqn:vdot}
\dot{V}(\ex)
&=&\ex^{T}(I_{p}\otimes{S^{T}}+\Gamma^{T}\otimes{H^{T}H})(\widehat{P}\otimes I_{n})\ex\nonumber\\
&&\qquad+\ex^{T}(\widehat{P}\otimes I_{n})(I_{p}\otimes{S}+\Gamma\otimes{H^{T}H})\ex\nonumber\\
&=&\ex^{T}(\widehat{P}\otimes({S^{T}}+S)
+(\Gamma^{T}\widehat{P}+\widehat{P}\Gamma)\otimes{H^{T}H})\ex\nonumber\\
&=&-\ex^{T}(\widehat{Q}\otimes{H^{T}H})\ex\,.
\end{eqnarray}
Thence $\dot{V}(\ex)\leq 0$ for both $\widehat{Q}$ and $H^{T}H$ (and
consequently their Kronecker product) are positive semidefinite.

Given some $\zeta\in\Real^{pn}$, let $\X\subset\Real^{pn}$ be the
closure of the set of all points $\eta$ such that
$\eta=(\one{r^{T}}\otimes e^{St})\zeta$ for some $t\geq 0$. Set $\X$
is compact for it is closed by definition and bounded due to that
$\zeta$ is fixed and $S$ is a neutrally-stable matrix. Having defined
$\X$, we now define 
\begin{eqnarray*}
\Omega:=\{\eta\in\Real^{pn}:(\one{r^{T}}\otimes
I_{n})\eta\in\X\,,V(\eta)\leq V(\zeta)\}\,.  
\end{eqnarray*}
Let us show that $\Omega$ is forward invariant. Observe that
\begin{eqnarray*}
\frac{d}{dt}\left((\one{r^{T}}\otimes I_{n})\ex(t)\right)
&=&(\one{r^{T}}\otimes I_{n})(I_{p}\otimes{S}+\Gamma\otimes{H^{T}H})\ex(t)\\
&=&(\one{r^{T}}\otimes S + \one{r^{T}}\Gamma\otimes{H^{T}H})\ex(t)\\
&=&(\one{r^{T}}\otimes S)\ex(t)\\
&=&(I_{p}\otimes{S})(\one{r^{T}}\otimes I_{n})\ex(t)\,.
\end{eqnarray*} 
We therefore have 
\begin{eqnarray}\label{eqn:aksuleyman}
(\one{r^{T}}\otimes I_{n})\ex(t)=(\one{r^{T}}\otimes e^{St})\ex(0)
\end{eqnarray}
which in turn implies that if $(\one{r^{T}}\otimes I_{n})\ex(0)\in\X$
then $(\one{r^{T}}\otimes I_{n})\ex(t)\in\X$ for all $t\geq
0$. Likewise, if $V(\ex(0))\leq V(\zeta)$ then $V(\ex(t))\leq
V(\zeta)$ for all $t\geq 0$ thanks to \eqref{eqn:vdot}. As a result,
if $\ex(0)\in\Omega$ then $\ex(t)\in\Omega$ for all $t\geq 0$, that
is, $\Omega$ is forward invariant with respect to \eqref{eqn:wrt}.

Set $\Omega$ is closed by construction. To show that it is compact
therefore all we need to do is to establish its boundedness. Let 
\begin{eqnarray*}
a:=\sup_{V(\eta)\leq V(\zeta)}|\eta-(\one{r^{T}}\otimes I_{n})\eta|\,.
\end{eqnarray*}
If we go back to the definition of $V$ we immediately see that $a<\infty$. Now let
\begin{eqnarray*}
b:=\sup_{\omega\in\X}|\omega|\,.
\end{eqnarray*}
Since $\X$ is bounded, $b<\infty$ as well. Now, given any $\eta\in\Omega$ we have
$|\eta-(\one{r^{T}}\otimes I_{n})\eta|\leq a$. Hence we can write
\begin{eqnarray*}
|\eta|
&\leq& a+|(\one{r^{T}}\otimes I_{n})\eta|\\
&\leq& a+\sup_{\omega\in\X}|\omega|\\
&=& a+b\,.
\end{eqnarray*}
Therefore $\Omega$ is bounded. Having shown that $\Omega$ is forward
invariant and compact, we can now invoke LaSalle's invariance
principle \cite[Thm.~3.4]{khalil96} and claim that any solution
starting in $\Omega$ approaches to the largest invariant set
$\W\subset\{\eta\in\Omega:\dot{V}(\eta)=0\}$.

Let now $\eta(\cdot)$ be a solution of \eqref{eqn:wrt} such that $\eta(t)\in\W$ for all $t\geq 0$. 
Given some $\tau\geq 0$, since $\dot{V}(\eta(\tau))=0$, we can write 
\begin{eqnarray*}
0
&=&\eta(\tau)^{T}(\widehat{Q}\otimes H^{T}H)\eta(\tau)\\
&=&\eta(\tau)^{T}((I_{p}-\one{r^{T}})^{T}Q(I_{p}-\one{r^{T}})\otimes H^{T}H)\eta(\tau)
\end{eqnarray*} 
which implies, since $Q$ is positive definite, that either
$((I_{p}-\one{r^{T}})\otimes I_{n})\eta(\tau)=0$ or $(I_{p}\otimes
H)\eta(\tau)=0$. Suppose now that 
\begin{eqnarray}\label{eqn:star}
((I_{p}-\one{r^{T}})\otimes I_{n})\eta(\tau)\neq 0\,. 
\end{eqnarray}
Continuity of $\eta(\cdot)$ implies that there exists $\delta>0$ such
that $((I_{p}-\one{r^{T}})\otimes I_{n})\eta(t)\neq 0$ for
$t\in[\tau,\,\tau+\delta]$. Therefore we must have $(I_{p}\otimes
H)\eta(t)=0$ for $t\in[\tau,\,\tau+\delta]$. However, observability of
pair $(H,\,S)$ stipulates that $\eta(t)=0$ for
$t\in[\tau,\,\tau+\delta]$ which contradicts \eqref{eqn:star}. We
then deduce $((I_{p}-\one{r^{T}})\otimes I_{n})\eta(t)=0$ for all
$t\geq 0$.  Therefore
$\W\subset\{\omega\in\Omega:\omega=(\one{r^{T}}\otimes
I_{n})\omega\}=\X$.

Let us now be given any solution $\ex(\cdot)$ of \eqref{eqn:wrt}.
Since $\zeta$ that we used to construct $\Omega$ was arbitrary, without
loss of generality, we can take $\ex(0)=\zeta$. That $\ex(0)\in\Omega$
implies that $\ex(t)$ approaches $\X$ as $t\to\infty$. Therefore we
are allowed to write
\begin{eqnarray*}
0
&=&\lim_{t\to \infty}\left(\ex(t)-(\one{r^{T}}\otimes I_{n})\ex(t)\right)\\
&=&\lim_{t\to \infty}\left(\ex(t)-(\one{r^{T}}\otimes e^{St})\ex(0)\right)
\end{eqnarray*}
where we used \eqref{eqn:aksuleyman}.
\end{proof}

The following result, which establishes synchronization of coupled
harmonic oscillators, comes as a byproduct of
Proposition~\ref{prop:lyap}.

\begin{corollary}\label{cor:harmonic}
Consider $p$ coupled harmonic oscillators (in $\Real^{2}$) described by
\begin{eqnarray*}
\dot{x}_{i}&=& y_{i}\\
\dot{y}_{i}&=& -x_{i}+\sum_{j=1}^{p}\gamma_{ij}(y_{j}-y_{i})\,.
\end{eqnarray*}
Oscillators synchronize for all connected interconnections $[\gamma_{ij}]$.
\end{corollary}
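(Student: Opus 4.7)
The plan is to recognize the coupled harmonic oscillators as a special instance of the systems~\eqref{eqn:skew} and then apply Proposition~\ref{prop:lyap} verbatim. Stacking each oscillator's variables as $\xi_i := [x_i\ y_i]^T \in \Real^2$, the intended choice is
\begin{eqnarray*}
S := \left[\!\!\begin{array}{rr} 0 & 1 \\ -1 & 0 \end{array}\!\!\right], \qquad H := [0\ 1].
\end{eqnarray*}
A direct check shows that $S\xi_i$ reproduces the homogeneous part $\dot{x}_i = y_i$, $\dot{y}_i = -x_i$, while $H^T H = \diag(0,1)$ ensures that the term $H^T H\sum_j \gamma_{ij}(\xi_j - \xi_i)$ contributes zero to the $\dot{x}_i$ equation and exactly $\sum_j \gamma_{ij}(y_j - y_i)$ to the $\dot{y}_i$ equation, matching the corollary's dynamics line for line.

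It then remains only to verify the three standing assumptions of Proposition~\ref{prop:lyap}. Assumption (A1) is immediate since $S + S^T = 0$. Assumption (A2) follows because the observability matrix $[H^T\ (HS)^T]^T$ equals $S$ itself, which is nonsingular. Assumption (A3) is precisely the connectivity hypothesis of the corollary.

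Proposition~\ref{prop:lyap} then delivers synchronization of the $\xi_i(\cdot)$ to $\bar\xi(t) = (r^T \otimes e^{St})[\xi_1(0)^T\ \cdots\ \xi_p(0)^T]^T$, which in particular implies $|\xi_i(t) - \xi_j(t)| \to 0$ as $t \to \infty$ for all $i,j$, establishing the claim. There is no genuine obstacle here; the entire argument amounts to matching notation and noting that velocity-only coupling corresponds exactly to the rank-one $H^T H$ structure required by the proposition. The only subtlety worth flagging is the dimension of $H$: taking $H$ to be $1 \times 2$ (rather than, say, $2 \times 2$) is what makes the coupling act solely on the second coordinate, and the same choice fortuitously preserves observability of $(H,S)$ because the rotational action of $S$ mixes position and velocity within one step.
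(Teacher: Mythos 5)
Your proof is correct and is exactly the instantiation the paper intends: the corollary is stated as a byproduct of Proposition~\ref{prop:lyap}, and your choices $\xi_i=[x_i\ y_i]^T$, $S=\left[\begin{smallmatrix}0&1\\-1&0\end{smallmatrix}\right]$, $H=[0\ 1]$ together with the verification of (A1)--(A3) are precisely the intended reduction.
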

We will use the next fact in Algorithm~\ref{alg:L} to construct a
feedback law for synchronization.

\begin{fact}\label{fact:one}
Let $F\in\Real^{n\times n}$ be a neutrally-stable matrix with all its eigenvalues 
residing on the imaginary axis. Then 
\begin{eqnarray}\label{eqn:limit}
P:=\lim_{t\to\infty}\ t^{-1}\int_{0}^{t}e^{F^{T}\tau}e^{F\tau}d\tau
\end{eqnarray}
is well-defined and symmetric positive definite. It also satisfies $PF+F^{T}P=0$.
\end{fact}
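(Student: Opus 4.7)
The plan is to exploit two consequences of the hypothesis on $F$: that $e^{F\tau}$ is uniformly bounded for $\tau\in\Real$ and that it admits a uniformly bounded inverse. The limit defining $P$ will then arise by averaging out a finite collection of pure oscillations.

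First I would use the structural fact that, since every eigenvalue of $F$ sits on the imaginary axis and every associated Jordan block has size one, there exist a real invertible $T$ and a real block-diagonal skew-symmetric $S$ (whose nonzero blocks have the form $\left[\begin{array}{cc}0&\omega\\-\omega&0\end{array}\right]$) with $F=TST^{-1}$. Hence $e^{F\tau}=Te^{S\tau}T^{-1}$, and because $e^{S\tau}$ is orthogonal, both $\|e^{F\tau}\|$ and $\|e^{-F\tau}\|$ are uniformly bounded in $\tau$.

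Next I would argue that the limit exists. The entries of $e^{F\tau}$ are linear combinations of $\cos(\omega_{k}\tau)$ and $\sin(\omega_{k}\tau)$ for finitely many real $\omega_{k}$, so the entries of $e^{F^{T}\tau}e^{F\tau}$ have the form $c_{0}+\sum_{j}[\alpha_{j}\cos(\mu_{j}\tau)+\beta_{j}\sin(\mu_{j}\tau)]$ with $\mu_{j}\neq 0$. Averaging each oscillatory summand over $[0,t]$ and dividing by $t$ tends to $0$, so the limit exists entry by entry and $P$ is well defined; symmetry is inherited from $e^{F^{T}\tau}e^{F\tau}$. For positive definiteness I would rewrite
\begin{eqnarray*}
x^{T}\!\left(\frac{1}{t}\int_{0}^{t}e^{F^{T}\tau}e^{F\tau}\,d\tau\right)\!x=\frac{1}{t}\int_{0}^{t}|e^{F\tau}x|^{2}\,d\tau,
\end{eqnarray*}
and use $|x|=|e^{-F\tau}e^{F\tau}x|\leq\|e^{-F\tau}\|\,|e^{F\tau}x|\leq c\,|e^{F\tau}x|$ to obtain the uniform lower bound $|e^{F\tau}x|^{2}\geq c^{-2}|x|^{2}$, which survives passage to the limit and yields $x^{T}Px\geq c^{-2}|x|^{2}>0$ for $x\neq 0$.

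For the Lyapunov identity, differentiation gives
\begin{eqnarray*}
\frac{d}{d\tau}(e^{F^{T}\tau}e^{F\tau})=F^{T}e^{F^{T}\tau}e^{F\tau}+e^{F^{T}\tau}e^{F\tau}F;
\end{eqnarray*}
integrating from $0$ to $t$ and dividing by $t$ yields
\begin{eqnarray*}
\frac{e^{F^{T}t}e^{Ft}-I}{t}=F^{T}\!\left(\frac{1}{t}\int_{0}^{t}e^{F^{T}\tau}e^{F\tau}\,d\tau\right)+\left(\frac{1}{t}\int_{0}^{t}e^{F^{T}\tau}e^{F\tau}\,d\tau\right)\!F.
\end{eqnarray*}
Uniform boundedness of $e^{F\tau}$ forces the left-hand side to $0$ as $t\to\infty$, giving $F^{T}P+PF=0$. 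The only subtle step is the entrywise averaging used to establish existence of the limit; once the structural similarity of $F$ to a real skew-symmetric matrix is in hand, everything else is mechanical.
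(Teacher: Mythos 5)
Your proposal is correct and follows essentially the same route as the paper: similarity of $F$ to a real skew-symmetric matrix gives uniform boundedness of $e^{F\tau}$ and $e^{-F\tau}$, which yields positive definiteness, and the identity $PF+F^{T}P=0$ follows from integrating $\frac{d}{d\tau}\bigl(e^{F^{T}\tau}e^{F\tau}\bigr)$ and dividing by $t$. Your justification of the existence of the limit via Ces\`{a}ro averaging of the trigonometric-polynomial entries is in fact more explicit than the paper's brief appeal to the (almost) periodicity of $e^{Ft}$, but it is the same underlying idea.
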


\begin{proof}
Matrix $F$ is similar to a skew-symmetric matrix. Therefore $e^{Ft}$
is (almost) periodic \cite{vanvleck64}. Periodicity directly yields
that limit in \eqref{eqn:limit} exists, that is, $P$ is well-defined.
Similarity to a skew-symmetric matrix also brings that $\inf_{t\in\Real}|e^{Ft}|>0$
and $\sup_{t\in\Real}|e^{Ft}|<\infty$. Same goes for
$F^{T}$. Therefore there exist scalars $a,\,b>0$ such that $aI_{n}\leq
e^{F^{T}t}e^{Ft}\leq bI_{n}$ for all $t\in\Real$. We can then write
\begin{eqnarray*}
aI_{n}\leq t^{-1}\int_{0}^{t}e^{F^{T}\tau}e^{F\tau}d\tau\leq bI_{n}
\end{eqnarray*}
for all $t\geq 0$. Therefore $P$ is positive definite. Symmetricity of $P$ comes 
by construction. Finally, observe that
\begin{eqnarray*}
|PF+F^{T}P|
&=&\lim_{t\to\infty}\ t^{-1}\left|\int_{0}^{t}\left(e^{F^{T}\tau}e^{F\tau}F
+F^{T}e^{F^{T}\tau}e^{F\tau}\right)d\tau\right|\\
&=&\lim_{t\to\infty}\ t^{-1}\left|\int_{0}^{t}d\left(e^{F^{T}\tau}e^{F\tau}\right)\right|\\
&\leq&\lim_{t\to\infty}\ t^{-1}\left(\left|e^{F^{T}t}e^{Ft}\right|
+\left|e^{F^{T}0}e^{F0}\right|\right)\\
&\leq&\lim_{t\to\infty}\ t^{-1}(b+1)\\
&=&0
\end{eqnarray*}
whence the result follows.
\end{proof}

\begin{algorithm}\label{alg:L}
Given $A\in\Real^{n\times n}$ that is neutrally stable and
$C\in\Real^{m\times n}$, we obtain $L\in\Real^{n\times m}$ as
follows. Let $n_{1}\leq n$ be the number of eigenvalues of $A$ that
reside on the imaginary axis. Let $n_{2}:=n-n_{1}$.  If $n_{1}=0$,
then let $L:=0$; else construct $L$ according to the following steps.

\noindent
{\em Step 1:} Choose $U\in\Real^{n\times n_{1}}$ and $W\in\Real^{n\times n_{2}}$ satisfying
\begin{eqnarray*}
[U\ W]^{-1}A[U\ W]=
\left[\!\!
\begin{array}{cc}
F & 0\\
0 & G
\end{array}\!\!
\right]
\end{eqnarray*} 
where all the eigenvalues of $F\in\Real^{n_{1}\times n_{1}}$ have zero real parts. 

\noindent
{\em Step 2:} Obtain $P\in\Real^{n_{1}\times n_{1}}$ from $F$ by \eqref{eqn:limit}. 

\noindent
{\em Step 3:} Finally let $L:=UP^{-1}(CU)^{T}$.
\end{algorithm} 

Below result establishes Theorem~\ref{thm:main}(b).

\begin{lemma}\label{lem:b}
Given $(C,\,A)\in\A_{\rm N}\cap\setO_{\rm P}$, let $L$ be constructed
according to Algorithm~\ref{alg:L}. Then for all $\Gamma\in\G_{> 0}$
solutions $x_{i}(\cdot)$ of systems~\eqref{eqn:system} with
$u_{i}=Lz_{i}$ synchronize to
\begin{eqnarray*}
\bar{x}(t):=({r^{T}\otimes e^{At}})
\left[\!\!
\begin{array}{c}
x_{1}(0)\\
\vdots\\
x_{p}(0)
\end{array}
\!\!
\right]
\end{eqnarray*}
where $r\in\Real^{p}$ is such that $r^{T}\Gamma=0$ and $r^{T}\one=1$.
\end{lemma}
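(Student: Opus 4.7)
The plan is to reduce to the setting of Proposition~\ref{prop:lyap} by exploiting the block structure built into $L$. First, I would verify that the candidate synchronization trajectory satisfies $\dot{\bar x}=A\bar x$: pre-multiplying the stacked dynamics $\dot\ex=(I_p\otimes A+\Gamma\otimes LC)\ex$ by $r^T\otimes I_n$ and invoking $r^T\Gamma=0$ gives $\tfrac{d}{dt}(r^T\otimes I_n)\ex=A(r^T\otimes I_n)\ex$, with initial value $\bar x(0)$. Introducing the errors $e_i:=x_i-\bar x$, the stack $\mathbf e:=[e_1^T\,\cdots\,e_p^T]^T$ satisfies the same linear equation as $\ex$ and obeys the invariant constraint $(r^T\otimes I_n)\mathbf e(t)\equiv 0$; it therefore suffices to show $e_i(t)\to 0$.

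Next, use the similarity $T:=[U\ W]$ of Step~1 to decompose $e_i=U\alpha_i+W\beta_i$. Since $L=UP^{-1}(CU)^T$ lies in the range of $U$, the lower block of $T^{-1}L$ vanishes, so the $\beta$-coordinates decouple completely: $\dot\beta_i=G\beta_i$ with $G$ Hurwitz, whence $\beta_i(t)\to 0$ exponentially. The $\alpha$-dynamics read
\begin{eqnarray*}
\dot\alpha_i=F\alpha_i+P^{-1}(CU)^T\sum_{j}\gamma_{ij}\bigl(CU(\alpha_j-\alpha_i)+CW(\beta_j-\beta_i)\bigr).
\end{eqnarray*}
Applying the further similarity $\tilde\alpha_i:=P^{1/2}\alpha_i$ and setting $\tilde F:=P^{1/2}FP^{-1/2}$, $\tilde H:=CUP^{-1/2}$ casts the unperturbed principal part into the form of \eqref{eqn:skew}. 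Fact~\ref{fact:one} supplies $PF+F^TP=0$, equivalent to $\tilde F+\tilde F^T=0$, verifying (A1). For (A2): every eigenvalue $\lambda$ of $F$ lies on the imaginary axis, so if $Fv=\lambda v$ and $CUv=0$, then $Uv$ would be an eigenvector of $A$ with $\mathrm{Re}\,\lambda\ge 0$ annihilated by $C$, contradicting detectability via PBH; hence $(CU,F)$, and therefore $(\tilde H,\tilde F)$, is observable. (A3) is just $\Gamma\in\G_{>0}$.

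The principal obstacle is the exogenous term driven by $\beta$ in the $\tilde\alpha$-equation, which is not present in Proposition~\ref{prop:lyap}. Since $\beta_i$ decays exponentially, this term is an exponentially vanishing perturbation. Writing $\tilde{\mathbf a}:=[\tilde\alpha_1^T\,\cdots\,\tilde\alpha_p^T]^T$ and reusing the Lyapunov function $V(\tilde{\mathbf a}):=\tilde{\mathbf a}^T(\widehat P\otimes I_{n_1})\tilde{\mathbf a}$ from the proof of Proposition~\ref{prop:lyap}, one obtains along the perturbed trajectories $\dot V\le -\tilde{\mathbf a}^T(\widehat Q\otimes\tilde H^T\tilde H)\tilde{\mathbf a}+c\,e^{-\sigma t}|\tilde{\mathbf a}|$ for some $c,\sigma>0$. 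A standard linear ISS/cascade argument (or a direct Gronwall estimate exploiting linearity and time-invariance) then transports the conclusion of Proposition~\ref{prop:lyap} to the perturbed case, yielding $\tilde\alpha_i(t)-(r^T\otimes e^{\tilde F t})\tilde{\mathbf a}(0)\to 0$. But the invariant constraint on $\mathbf e$, combined with invertibility of $T$ and of $P^{1/2}$, carries over to $(r^T\otimes I_{n_1})\tilde{\mathbf a}(t)\equiv 0$, so this common limit is identically zero. Therefore $\alpha_i(t)\to 0$ and $\beta_i(t)\to 0$, giving $e_i(t)\to 0$, i.e., $x_i(t)\to\bar x(t)$, as required.
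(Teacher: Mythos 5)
Your reduction is the one the paper itself uses: the coordinates $U^{\dagger}x_i$ (rescaled by $P^{1/2}$) and $W^{\dagger}x_i$, the verification of (A1)--(A3) for the principal block, and the identification of the $\beta$-driven cross term as an exponentially decaying perturbation all appear, essentially verbatim, in the paper's proof of Lemma~\ref{lem:b}; your PBH justification of observability of $(CU,F)$ and the passage to error coordinates $e_i=x_i-\bar x$ are welcome extra detail that the paper leaves implicit. The one step that does not hold up as written is the treatment of the perturbation. From the inequality $\dot V\le -\tilde{\mathbf{a}}^{T}(\widehat Q\otimes\tilde H^{T}\tilde H)\tilde{\mathbf{a}}+c\,e^{-\sigma t}|\tilde{\mathbf{a}}|$ no ``standard ISS argument'' delivers convergence, because the dissipation term is only positive \emph{semi}definite in $\tilde{\mathbf{a}}$: it vanishes on the nontrivial kernel of $\widehat Q\otimes\tilde H^{T}\tilde H$ (in general $\tilde H$ has fewer rows than columns). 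This is precisely why Proposition~\ref{prop:lyap} had to resort to LaSalle rather than a direct Lyapunov decrease, and LaSalle is not available for the perturbed, effectively time-varying, system. So the sentence beginning ``A standard linear ISS/cascade argument\ldots'' is a genuine gap.

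The gap is repairable, and your own parenthetical points toward the repair. The error system is linear and time-invariant, and Proposition~\ref{prop:lyap} shows that every solution of the unperturbed system started in the invariant subspace $\{\eta:(r^{T}\otimes I_{n_1})\eta=0\}$ converges to zero; hence the restriction of $M:=I_p\otimes\tilde F+\Gamma\otimes\tilde H^{T}\tilde H$ to that subspace is Hurwitz, and the convergence is in fact exponential. Since the forcing term also lies in that subspace for every $t$ (again because $r^{T}\Gamma=0$) and decays exponentially with $\beta$, variation of constants finishes the argument. The paper takes a slightly different but equivalent route: it passes to the rotating frame $\omega_i(t)=e^{-St}\xi_i(t)$, notes that the resulting state transition matrix $\Phi(t,\tau)$ is uniformly bounded with $\lim_{t\to\infty}\Phi(t,\tau)=\one r^{T}\otimes I_{n_1}$, and annihilates the forced response through the identity $(\one r^{T}\otimes I_{n_1})(\Gamma\otimes\,\cdot\,)=0$. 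Either of these should replace your ISS sentence; with that substitution your proof is complete.
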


\begin{proof}
Let the variables that are not introduced here be defined as in
Algorithm~\ref{alg:L}.  Let $H:=CUP^{-1/2}$ and
$S:=P^{1/2}FP^{-1/2}$. Then $(H,\,S)$ is observable for $(C,\,A)$ is
detectable. Also, note that $S$ is skew-symmetric due to $PF+F^{T}P=0$.

We let $U^{\dagger}\in\Real^{n_{1}\times n}$ and $W^{\dagger}\in\Real^{n_{2}\times n}$ be such that
\begin{eqnarray*}
\left[\!\!
\begin{array}{c}
U^{\dagger}\\
W^{\dagger}
\end{array}\!\!
\right]=[U\ W]^{-1}\,.
\end{eqnarray*} 
Note then that $U^{\dagger}U=I_{n_{1}}$, $W^{\dagger}W=I_{n_{2}}$, $U^{\dagger}W=0$, and $W^{\dagger}U=0$.
Since $u_{i}=Lz_{i}$, from \eqref{eqn:system} we obtain
\begin{eqnarray}\label{eqn:piece1}
\dot{x}_{i}=Ax_{i}+LC\sum_{j=1}^{p}\gamma_{ij}(x_{j}-x_{i})
\end{eqnarray}
Let now $\xi_{i}\in\Real^{n_{1}}$ and $\eta_{i}\in\Real^{n_{2}}$ be 
\begin{eqnarray}\label{eqn:piece2}
\left[\!\!
\begin{array}{c}
\xi_{i}\\
\eta_{i}\\
\end{array}\!\!
\right]:=
\left[\!\!
\begin{array}{cc}
P^{1/2}&0\\
0& I_{n_{2}}\\
\end{array}\!\!
\right]
\left[\!\!
\begin{array}{c}
U^{\dagger}\\
W^{\dagger}
\end{array}\!\!
\right]x_{i}
\end{eqnarray}
Combining \eqref{eqn:piece1} and \eqref{eqn:piece2} we can write
\begin{eqnarray}
\dot{\xi}_{i}&=&S\xi_{i}
+H^{T}H\sum_{j=1}^{p}\gamma_{ij}(\xi_{j}-\xi_{i})
+H^{T}CW\sum_{j=1}^{p}\gamma_{ij}(\eta_{j}-\eta_{i})\label{eqn:badem1}\\
\dot{\eta}_{i}&=&G\eta_{i}\,.\label{eqn:badem2}
\end{eqnarray}
Let $\Gamma\in\Real^{p\times p}$ be connected and $r\in\Real^{p}$ be such that
$r^{T}\Gamma=0$. Then define $\omega_{i}:\Real_{\geq
0}\to\Real^{n_{1}}$ as $\omega_{i}(t):=e^{-St}\xi_{i}(t)$ for
$i=1,\,2,\,\ldots,\,p$. Let $\dabilyu:=[\omega_{1}^{T}\ \omega_{2}^{T}\ \ldots\
\omega_{p}^{T}]^{T}$ and $\vi:=[\eta_{1}^{T}\ \eta_{2}^{T}\ \ldots\
\eta_{p}^{T}]^{T}$. Starting from \eqref{eqn:badem1} and
\eqref{eqn:badem2} we can write
\begin{eqnarray*}
\dot{\dabilyu}(t)&=&(\Gamma\otimes e^{-St}H^{T}He^{St})\dabilyu(t)
+(\Gamma\otimes e^{-St}H^{T}CWe^{Gt})\vi(0)\,.
\end{eqnarray*}
Thence
\begin{eqnarray}\label{eqn:integral}
\dabilyu(t)=\Phi(t,\,0)\dabilyu(0)
+\left[\int_{0}^{t}\Phi(t,\,\tau)(\Gamma\otimes e^{-S\tau}H^{T}CWe^{G\tau})d\tau\right]\vi(0)
\end{eqnarray}
where 
\begin{eqnarray*}
\Phi(t,\,\tau):={\rm exp}\left(
\int_{\tau}^{t}\left(\Gamma\otimes e^{-S\alpha}H^{T}He^{S\alpha}\right)d\alpha\right)
\end{eqnarray*}
is the state transition matrix \cite{antsaklis97}. From
Proposition~\ref{prop:lyap} we can deduce that $\Phi(t,\,\tau)$ is
uniformly bounded for all $t$ and $\tau$. Also, for any fixed $\tau$
we have $\lim_{t\to\infty}\Phi(t,\,\tau)=\one{r^{T}}\otimes
I_{n_{1}}$. Moreover, $e^{St}$ is uniformly bounded for all $t$, and
$e^{Gt}$ decays exponentially as $t\to\infty$ for $G$ is
Hurwitz. Therefore we can write
\begin{eqnarray*}
\lefteqn{\lim_{t\to\infty}\int_{0}^{t}\Phi(t,\,\tau)
(\Gamma\otimes e^{-S\tau}H^{T}CWe^{G\tau})d\tau}\\
&&\qquad\qquad\qquad=\int_{0}^{\infty}\left(\lim_{t\to\infty}\Phi(t,\,\tau)\right)
(\Gamma\otimes e^{-S\tau}H^{T}CWe^{G\tau})d\tau\\
&&\qquad\qquad\qquad=\int_{0}^{\infty}(\one{r^{T}}\otimes I_{n_{1}})(\Gamma\otimes e^{-S\tau}H^{T}CWe^{G\tau})d\tau\\
&&\qquad\qquad\qquad=0\,.
\end{eqnarray*}
Then, by \eqref{eqn:integral}, we can write
\begin{eqnarray*}
\lim_{t\to\infty}\dabilyu(t)=(\one{r^{T}}\otimes I_{n_{1}})\dabilyu(0)\,.
\end{eqnarray*}
Therefore solutions $\xi_{i}(\cdot)$ synchronize to $(r^{T}\otimes
e^{St})\dabilyu(0)$. Moreover, $\lim_{t\to\infty}\vi(t)=0$ for $G$ is
Hurwitz. Hence we can say that solutions $\eta_{i}(\cdot)$ synchronize to
$(r^{T}\otimes e^{Gt})\vi(0)$. As a result, solutions $x_{i}(\cdot)$ synchronize to
\begin{eqnarray*}
\left(r^{T}\otimes 
\left[UP^{-1/2}\ \ W\right]
\left[\!\!
\begin{array}{cc}
e^{St}&0\\0&e^{Gt}
\end{array}\!\!
\right]
\left[\!\!\begin{array}{c}P^{1/2}U^{\dagger}\\ W^{\dagger}\end{array}\!\!\right]
\right)
\left[\!\!
\begin{array}{c}
x_{1}(0)\\
\vdots\\
x_{p}(0)
\end{array}\!\!
\right]&&\\
=(r^{T}\otimes e^{At})
\left[\!\!
\begin{array}{c}
x_{1}(0)\\
\vdots\\
x_{p}(0)
\end{array}\!\!
\right]&&
\end{eqnarray*}
Hence the result.
\end{proof}

\subsection{System matrix critically unstable}
The third statement of Theorem~\ref{thm:main} is not surprising. The
naive reasoning, which we will later have no difficulty in
formalizing, is that when the output matrix $C$ is full column rank,
there exists an $L$ such that $LC$ equals the identity matrix. Then,
see \eqref{eqn:piece1}, there will be two {\em forces} driving the
array. One is due to system matrices $A$, the other due to coupling
between the systems. When $A$ matrices have no eigenvalues with
positive real part, even if the systems tend to move away from each
other, that tendency will not be of exponential nature. However, if
$\Gamma$ is connected, that apart-drifting behaviour will be dominated
by the exponential attraction introduced by the coupling between the
systems. The result will be synchronization of the solutions. Having
said that, now we can state our main reason for including this
existence statement in Theorem~\ref{thm:main}: to emphasize the sixth
statement of Theorem~\ref{thm:main}, which is a nonexistence
result. When availability of full-state information is replaced with
detectability, which in all the other cases we study causes no
problem, synchronizability is lost for critically unstable
systems. That is, Theorem~\ref{thm:main}(f) holds {\em despite}
Theorem~\ref{thm:main}(c), which we find counterintuitive. Below we
establish Theorem~\ref{thm:main}(c).

\begin{lemma}\label{lem:c}
Given $(C,\,A)\in\A_{\rm J}\cap\setO_{\rm F}$, let $L:=(C^{T}C)^{-1}C^{T}$. 
Then for all $\Gamma\in\G_{> 0}$
solutions $x_{i}(\cdot)$ of systems~\eqref{eqn:system} with
$u_{i}=Lz_{i}$ synchronize to
\begin{eqnarray*}
\bar{x}(t):=({r^{T}\otimes e^{At}})
\left[\!\!
\begin{array}{c}
x_{1}(0)\\
\vdots\\
x_{p}(0)
\end{array}
\!\!
\right]
\end{eqnarray*}
where $r\in\Real^{p}$ is such that $r^{T}\Gamma=0$ and $r^{T}\one=1$.
\end{lemma}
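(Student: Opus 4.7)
The plan is to exploit the fact that with the choice $L = (C^TC)^{-1}C^T$ we have $LC = I_n$, which collapses the feedback-closed dynamics to the clean form $\dot{x}_i = Ax_i + \sum_j \gamma_{ij}x_j$ (using $\gamma_{ii} = -\sum_{j\neq i}\gamma_{ij}$). Stacking the states as $\ex := [x_1^T\ \cdots\ x_p^T]^T$, the closed-loop system reads
\begin{eqnarray*}
\dot{\ex} = (I_p \otimes A + \Gamma \otimes I_n)\ex\,.
\end{eqnarray*}

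The key observation is that the two summands commute: $(I_p \otimes A)(\Gamma \otimes I_n) = \Gamma \otimes A = (\Gamma \otimes I_n)(I_p \otimes A)$. Therefore the matrix exponential factors cleanly, and using the product property of the Kronecker product,
\begin{eqnarray*}
\ex(t) = \bigl(e^{\Gamma t} \otimes e^{At}\bigr)\ex(0)\,.
\end{eqnarray*}
I would then note that $\one \otimes \bar{x}(t) = (\one r^T \otimes e^{At})\ex(0)$, so the synchronization error can be written as
\begin{eqnarray*}
\ex(t) - \one \otimes \bar{x}(t) = \bigl((e^{\Gamma t} - \one r^T) \otimes e^{At}\bigr)\ex(0)\,,
\end{eqnarray*}
reducing the problem to showing that this quantity tends to zero.

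The final step is the analytic one. Since $\Gamma$ is connected, $\lambda = 0$ is a simple eigenvalue of $\Gamma$ and all other eigenvalues satisfy ${\rm Re}(\lambda) < 0$. Combined with $\lim_{t\to\infty}e^{\Gamma t} = \one r^T$, standard spectral decomposition of $\Gamma$ (or the argument already used in the proof of Proposition~\ref{prop:lyap} applied to $\Gamma - \one r^T$) yields that $e^{\Gamma t} - \one r^T$ decays exponentially at rate $|{\rm Re}(\lambda_2(\Gamma))| > 0$, possibly with a polynomial prefactor coming from Jordan blocks of $\Gamma$ at nonzero eigenvalues. On the other hand, since $A \in \A_{\rm J}$ the norm $|e^{At}|$ grows at most polynomially in $t$. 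The product of a polynomially-bounded factor and an exponentially-decaying factor tends to zero, giving the desired synchronization.

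The main obstacle I expect is not conceptual but notational: packaging the polynomial-times-exponential bound for $(e^{\Gamma t} - \one r^T) \otimes e^{At}$ cleanly, especially since both $\Gamma$ and $A$ can carry nontrivial Jordan structure (for $A$, this is exactly the point of allowing $\A_{\rm J}$ rather than $\A_{\rm N}$). One clean route is to pick any $\mu$ with $0 < \mu < |{\rm Re}(\lambda_2(\Gamma))|$ and show $|e^{\Gamma t} - \one r^T| \leq c_1 e^{-\mu t}$ and $|e^{At}| \leq c_2 (1+t)^{n-1}$, after which the factorization $|((e^{\Gamma t} - \one r^T) \otimes e^{At})| = |e^{\Gamma t} - \one r^T|\,|e^{At}|$ finishes the argument.
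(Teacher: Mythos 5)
Your proposal is correct and follows essentially the same route as the paper's proof: reduce to $\dot{\ex}=(I_p\otimes A+\Gamma\otimes I_n)\ex$ via $LC=I_n$, factor the solution as $e^{\Gamma t}\otimes e^{At}$, and kill the error term $((e^{\Gamma t}-\one r^T)\otimes e^{At})\ex(0)$ by pairing the exponential decay of $e^{\Gamma t}-\one r^T$ (via the Hurwitz property of $\Gamma-\one r^T$ from Proposition~\ref{prop:lyap}) against the at-most-polynomial growth of $e^{At}$. The only cosmetic difference is that the paper splits $e^{\Gamma t}-\one r^T$ into $e^{(\Gamma-\one r^T)t}$ and an $e^{-t}\one r^T$ term and bounds each, while you bound the difference directly; your bound $|e^{At}|\le c_2(1+t)^{n-1}$ is in fact slightly cleaner than the paper's $M_a t^k$, which degenerates at $t=0$.
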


\begin{proof}
For $u_{i}=Lz_{i}$ and some connected $\Gamma\in\Real^{p\times p}$,
with $r\in\Real^{p}$ such that $r^{T}\Gamma=0$ and $r^{T}\one=1$, from
\eqref{eqn:system} one can write
\begin{eqnarray*}
\dot{x}_{i}
&=&Ax_{i}+LC\sum_{j=1}^{p}\gamma_{ij}(x_{j}-x_{i})\\
&=&Ax_{i}+\sum_{j=1}^{p}\gamma_{ij}(x_{j}-x_{i})
\end{eqnarray*}
since $LC=I_{n}$. Letting $\ex:=[x_{1}^{T}\ x_{2}^{T}\ \ldots\
x_{p}^{T}]^{T}$ we obtain
\begin{eqnarray*}
\dot{\ex}=(I_{p}\otimes A+\Gamma\otimes I_{n})\ex
\end{eqnarray*}
which implies 
\begin{eqnarray*}
\ex(t)=(e^{\Gamma t}\otimes e^{At})\ex(0)\,.
\end{eqnarray*}
Let us define 
\begin{eqnarray}
\tilde\ex(t)
&:=&\ex(t)-(\one r^{T}\otimes e^{At})\ex(0)\nonumber \\
&=&(e^{\Gamma t}\otimes e^{At})\ex(0)-(\one r^{T}\otimes e^{At})\ex(0)\nonumber \\
&=&((e^{\Gamma t}-\one r^{T})\otimes e^{At})\ex(0)\,. \label{eqn:sekersu}
\end{eqnarray}
From the proof of Theorem~\ref{lem:b} we know that $e^{\Gamma
t}-\one r^{T}=e^{(\Gamma-\one r^{T})t}+e^{-t}\one{r^{T}}$ and that
$\Gamma-\one r^{T}$ is Hurwitz. Also recall that $A$ does not have any
eigenvalues with positive real part. Therefore there exist
$M_{g},\,M_{a},\,\lambda>0$, and integer $k\leq n-1$ such that
$|e^{(\Gamma-\one{r^{T}})t}|\leq M_{g}e^{-\lambda t}$ and
$|e^{At}|\leq M_{a}t^{k}$. Then we can proceed from \eqref{eqn:sekersu} as
\begin{eqnarray*}
\tilde\ex(t)
&=&((e^{\Gamma t}-\one r^{T})\otimes e^{At})\ex(0)\\
&=&(e^{(\Gamma-\one r^{T})t}\otimes e^{At})\ex(0)+e^{-t}(\one r^{T}\otimes e^{At})\ex(0)
\end{eqnarray*}
whence
\begin{eqnarray*}
|\tilde\ex(t)|
&\leq& |e^{(\Gamma-\one r^{T})t}|\cdot|e^{At}|\cdot|\ex(0)|+e^{-t}|\one r^{T}|\cdot|e^{At}|\cdot|\ex(0)|\\
&\leq& M_{g}e^{-\lambda t}M_{a}t^{k}|\ex(0)|+e^{-t}M_{a}t^{k}|\one r^{T}|\cdot|\ex(0)|
\end{eqnarray*}
whence $\lim_{t\to\infty}|\tilde\ex(t)|=0$. Hence the result.
\end{proof} 

\subsection{System matrix arbitrary}
The fourth statement of Theorem~\ref{thm:main} is a special case of a
well-studied problem \cite{wu95,pecora98,belykh06,wu05,wu05b} which is
based on the idea that {\em identical systems synchronize under strong
enough coupling.} For coherence we regenerate a proof here. We first
borrow a well-known result from optimal control theory
\cite{sontag98}: Given a detectable pair $(C,\,A)$, where
$C\in\Real^{m\times n}$ and $A\in\Real^{n\times n}$, the following
algebraic Riccati equation
\begin{eqnarray}\label{eqn:riccati}
AP+PA^{T}+I_{n}-PC^{T}CP=0
\end{eqnarray}  
has a (unique) solution $P=P^{T}>0$. One can rewrite
\eqref{eqn:riccati} as
\begin{eqnarray*}
(A-PC^{T}C)P+P(A-PC^{T}C)^{T}+(I_{n}+PC^{T}CP)=0
\end{eqnarray*}
whence we infer that $A-PC^{T}C$ is Hurwitz. 

\begin{claim}\label{clm:lqr}
Let $C\in\Real^{m\times n}$ and $A\in\Real^{n\times n}$ satisfy
\eqref{eqn:riccati} for some symmetric positive definite $P$. Then for
all $\sigma\geq 1$ and $\omega\in\Real$ matrix
$A-(\sigma+j\omega)PC^{T}C$ is Hurwitz.
\end{claim}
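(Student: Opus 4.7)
The plan is to use $P^{-1}$ as a Lyapunov matrix for the (complex) matrix $\tilde{A}:=A-(\sigma+j\omega)PC^{T}C$ and to exploit the Riccati equation to produce a definite-sign identity. The crucial observation is that the $j\omega$ contribution and its conjugate will cancel when we symmetrize with $P^{-1}$, so the same Lyapunov certificate that works for the real case $\omega=0$ will keep working for arbitrary real $\omega$ provided that $\sigma$ is large enough.

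First I would pre- and post-multiply the Riccati equation \eqref{eqn:riccati} by $P^{-1}$ to obtain the equivalent identity
\begin{eqnarray*}
P^{-1}A+A^{T}P^{-1}=C^{T}C-P^{-2}\,.
\end{eqnarray*}
This converts the quadratic Riccati statement into a linear Lyapunov-style statement about $A$ with Hermitian weight $P^{-1}$. Next, compute
\begin{eqnarray*}
P^{-1}\tilde{A}+\tilde{A}^{H}P^{-1}
&=&P^{-1}A+A^{T}P^{-1}-(\sigma+j\omega)C^{T}C-(\sigma-j\omega)C^{T}C\\
&=&P^{-1}A+A^{T}P^{-1}-2\sigma C^{T}C\,,
\end{eqnarray*}
where I used $P^{-1}\cdot PC^{T}C=C^{T}C$ and $C^{T}CP\cdot P^{-1}=C^{T}C$ so the imaginary cross terms cancel exactly. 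Substituting the identity from the previous step yields
\begin{eqnarray*}
P^{-1}\tilde{A}+\tilde{A}^{H}P^{-1}=-P^{-2}-(2\sigma-1)C^{T}C\,.
\end{eqnarray*}

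For $\sigma\geq 1$ we have $2\sigma-1\geq 1>0$, so the right-hand side is the sum of the negative definite matrix $-P^{-2}$ and the negative semidefinite matrix $-(2\sigma-1)C^{T}C$, hence it is Hermitian negative definite. The conclusion then follows by a routine eigenvalue argument: if $\tilde{A}v=\lambda v$ for some nonzero $v\in\Complex^{n}$, then sandwiching between $v^{H}$ and $v$ gives $2\,\mathrm{Re}(\lambda)\,v^{H}P^{-1}v<0$, and since $P^{-1}$ is positive definite we obtain $\mathrm{Re}(\lambda)<0$.

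There is no real obstacle here; the only thing that could conceivably go wrong is the vanishing of the $j\omega$ contribution, which depends on the particular pairing $P\cdot C^{T}C$ (so that $P^{-1}$ kills the left $P$ while $P^{-1}$ cancels the right $P$ on the conjugate side). This algebraic accident is precisely why the $P$ supplied by the Riccati equation, rather than an arbitrary Lyapunov matrix, is the correct choice, and it is also why the claim would fail for $\sigma<\tfrac{1}{2}$: once $2\sigma-1$ becomes negative the sign of the $C^{T}C$ term flips and only the $-P^{-2}$ contribution, which may be much smaller than $(1-2\sigma)C^{T}C$ on suitable subspaces, remains to push eigenvalues into the left half plane.
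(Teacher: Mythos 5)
Your proof is correct and is essentially the paper's own argument: both derive from the Riccati equation a complex Lyapunov identity in which the $\pm j\omega$ contributions cancel, the only cosmetic difference being that you conjugate by $P^{-1}$ to obtain $P^{-1}\tilde{A}+\tilde{A}^{H}P^{-1}=-P^{-2}-(2\sigma-1)C^{T}C$ while the paper works directly with $\tilde{A}P+P\tilde{A}^{H}=-I_{n}-(2\sigma-1)PC^{T}CP$. (Only your closing aside overreaches slightly: for $\sigma<\tfrac{1}{2}$ it is this particular certificate that fails, not necessarily the Hurwitz property of $A-(\sigma+j\omega)PC^{T}C$ itself.)
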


\begin{proof}
Let $\varepsilon:=\sigma-1\geq 0$. Write 
\begin{eqnarray}\label{eqn:complexlyap}
\lefteqn{(A-(\sigma+j\omega)PC^{T}C)P+P(A-(\sigma+j\omega)PC^{T}C)^{H}}\nonumber\\
&&=(A-(\sigma+j\omega)PC^{T}C)P+P(A-(\sigma-j\omega)PC^{T}C)^{T}\nonumber\\
&&=(A-(1+\varepsilon)PC^{T}C)P+P(A-(1+\varepsilon)PC^{T}C)^{T}\nonumber\\
&&=(A-PC^{T}C)P+P(A-PC^{T}C)^{T}-2\varepsilon PC^{T}CP\nonumber\\
&&=-I_{n}-(1+2\varepsilon)PC^{T}CP\,.
\end{eqnarray}
Observe that
\eqref{eqn:complexlyap} is nothing but (complex) Lyapunov equation.
\end{proof}

Below result establishes Theorem~\ref{thm:main}(d).
\begin{lemma}
Given $\delta>0$ and $(C,\,A)\in \setO_{\rm P}$, let
$L:=\max\{1,\,\delta^{-1}\}PC^{T}$ where $P$ is the solution to
\eqref{eqn:riccati}. Then for all $\Gamma\in\G_{\geq \delta}$
solutions $x_{i}(\cdot)$ of systems~\eqref{eqn:system} with
$u_{i}=Lz_{i}$ synchronize to
\begin{eqnarray*}
\bar{x}(t):=({r^{T}\otimes e^{At}})
\left[\!\!
\begin{array}{c}
x_{1}(0)\\
\vdots\\
x_{p}(0)
\end{array}
\!\!
\right]
\end{eqnarray*}
where $r\in\Real^{p}$ is such that $r^{T}\Gamma=0$ and $r^{T}\one=1$.
\end{lemma}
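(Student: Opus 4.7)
The plan is to reduce the $pn$-dimensional synchronization analysis to a collection of Lyapunov problems, one for each non-zero eigenvalue of $\Gamma$, and then invoke Claim~\ref{clm:lqr}. Stacking the states as $\ex:=[x_1^T\ \cdots\ x_p^T]^T$ and substituting $u_i=Lz_i$ with $L=cPC^T$, where $c:=\max\{1,\delta^{-1}\}$, gives the closed-loop dynamics
\begin{eqnarray*}
\dot{\ex}=\bigl(I_p\otimes A+c\,\Gamma\otimes PC^TC\bigr)\ex\,.
\end{eqnarray*}
Because $\Gamma$ is connected, $\lambda=0$ is simple with right eigenvector $\one$ and left eigenvector $r^T$, and the subspace $\V:=\{v\in\Real^p:r^Tv=0\}$ is $\Gamma$-invariant. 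Pick $T=[\one\ T_2]\in\Real^{p\times p}$ whose remaining columns form a basis for $\V$; then $T^{-1}\Gamma T=\diag(0,\tilde\Gamma)$ with $\tilde\Gamma\in\Real^{(p-1)\times(p-1)}$ having eigenvalues $\lambda_2,\ldots,\lambda_p$ (the nonzero spectrum of $\Gamma$), and the first row of $T^{-1}$ equals $r^T$.

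Setting $\ey:=(T^{-1}\otimes I_n)\ex$ block-diagonalizes the dynamics into $\dot{\ey}_1=A\ey_1$ and $\dot{\ey}_{2:p}=(I_{p-1}\otimes A+c\,\tilde\Gamma\otimes PC^TC)\ey_{2:p}$. Since $\ey_1(t)=(r^T\otimes I_n)\ex(t)$, the first block immediately yields $\ey_1(t)=e^{At}\ey_1(0)=(r^T\otimes e^{At})\ex(0)=\bar{x}(t)$, matching the claimed limit trajectory. Hence it remains to show the error block $\ey_{2:p}$ vanishes, equivalently that $I_{p-1}\otimes A+c\,\tilde\Gamma\otimes PC^TC$ is Hurwitz.

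To analyze this matrix I bring $\tilde\Gamma$ to complex Schur form $S^{-1}\tilde\Gamma S=U$, upper triangular with diagonal entries $\lambda_2,\ldots,\lambda_p$. Conjugating by $S\otimes I_n$ leaves $I_{p-1}\otimes A+c\,U\otimes PC^TC$, which is block upper triangular with diagonal blocks $A+c\lambda_i PC^TC$ for $i=2,\ldots,p$. Writing $-\lambda_i=\sigma_i+j\omega_i$, connectedness and the assumption $\Gamma\in\G_{\geq\delta}$ give $\sigma_i\geq\delta$. By the choice $c=\max\{1,\delta^{-1}\}$ one checks that $c\sigma_i\geq 1$ in both cases $\delta\geq 1$ and $\delta<1$, so each diagonal block $A-(c\sigma_i+jc\omega_i)PC^TC$ is Hurwitz by Claim~\ref{clm:lqr}. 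Consequently $\ey_{2:p}(t)\to 0$, and pulling back through $T$ gives $\ex(t)-\one\otimes\bar{x}(t)\to 0$, i.e.\ $x_i(t)\to\bar{x}(t)$ for every $i$.

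The only step requiring any care is verifying the scaling $c\sigma_i\geq 1$ so that Claim~\ref{clm:lqr} applies uniformly across the (possibly complex) nonzero eigenvalues of $\Gamma$; the algebraic transformations that decouple the mean mode from the error modes are then essentially bookkeeping, and the Hurwitzness of the error dynamics follows block by block from the Riccati-based Lyapunov argument already established.
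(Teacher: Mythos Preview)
Your proof is correct and follows essentially the same route as the paper: stack the dynamics, use a similarity that isolates the $\one$-direction (with first row $r^{T}$ of the inverse), reduce the remaining $(p-1)n$-dimensional error block to block upper triangular form with diagonal blocks $A+c\lambda_{i}PC^{T}C$, and apply Claim~\ref{clm:lqr} after checking $c\,|{\rm Re}(\lambda_{i})|\geq 1$. The only cosmetic difference is that you triangularize in two steps (first separate the zero eigenvalue over $\Real$, then Schur-triangularize $\tilde\Gamma$ over $\Complex$) whereas the paper does both in a single complex transformation.
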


\begin{proof}
Given $\Gamma\in\G_{\geq \delta}$, from \eqref{eqn:system} we can write
\begin{eqnarray}\label{eqn:couplexi}
\dot{x}_{i}=Ax_{i}+LC\sum_{j=1}^{p}\gamma_{ij}(x_{j}-x_{i})\,.
\end{eqnarray}
Stack individual system states as
$\ex:=[x_{1}^{T}\ x_{2}^{T}\ \ldots\ x_{p}^{T}]^{T}$. Then from 
\eqref{eqn:couplexi} we can write
\begin{eqnarray}\label{eqn:couplex}
\dot{\ex}=(I_{p}\otimes A + \Gamma\otimes LC)\ex\,.
\end{eqnarray}
Now let $Y\in\Complex^{p\times (p-1)}$, $W\in\Complex^{(p-1)\times p}$,
$V\in\Complex^{p\times p}$, and upper triangular
$\Delta\in\Complex^{(p-1)\times(p-1)}$ be such that
\begin{eqnarray*}
V=\left[\one\ Y\right]\ ,\quad 
V^{-1}=
\left[\!\!
\begin{array}{c}
r^{T}\\
W
\end{array}\!\!
\right]
\end{eqnarray*} 
and 
\begin{eqnarray*}
V^{-1}\Gamma V=
\left[\!\!
\begin{array}{cc}
0&0\ \cdots\ 0\\
\begin{array}{c}
0\\
\vdots\\
0
\end{array}
& \Delta
\end{array}\!\!
\right]
\end{eqnarray*}
Note that the diagonal entries of $\Delta$ are nothing but the nonzero
eigenvalues of $\Gamma$ which we know have real parts no greater than
$-\delta$. Engage the change of variables $\vi:=(V^{-1}\otimes
I_{n})\ex$ and modify
\eqref{eqn:couplex} first into
\begin{eqnarray*}
\dot{\vi}=(I_{p}\otimes A+V^{-1}\Gamma V\otimes LC)\vi
\end{eqnarray*}
and then into
\begin{eqnarray}\label{eqn:vi}
\dot{\vi}=
\left[\!\!
\begin{array}{cc}
A&0_{n\times(p-1)n}\\
0_{(p-1)n\times n}& I_{p-1}\otimes A+\Delta\otimes LC
\end{array}\!\!
\right]\vi
\end{eqnarray}
Observe that $I_{p-1}\otimes A+\Delta\otimes LC$ is upper block
triangular with (block) diagonal entries of the form $A+\lambda_{i}
\max\{1,\,\delta^{-1}\}PC^{T}C$ for $i=2,\,3,\,\ldots,\,p$ with ${\rm Re}(\lambda_{i})\leq
-\delta$. Claim~\ref{clm:lqr} implies therefore that $I_{p-1}\otimes
A+\Delta\otimes LC$ is Hurwitz. Thus \eqref{eqn:vi} implies
\begin{eqnarray*}
\lim_{t\to\infty}\left|\vi(t)-\left[\!\!
\begin{array}{cc}
e^{At}&0_{n\times(p-1)n}\\
0_{(p-1)n\times n}& 0_{(p-1)n\times(p-1)n}
\end{array}\!\!
\right]\vi(0)\right|= 0
\end{eqnarray*} 
which yields
\begin{eqnarray*}
\lim_{t\to\infty}\left|\ex(t)-(\one r^{T}\otimes e^{At})\ex(0)\right|= 0\,.
\end{eqnarray*} 
Hence the result.
\end{proof}

\section{Cases when no synchronizing feedback exists}\label{sec:negative}
In this section the nonsufficiency statements of the main theorem are
demonstrated. Below we establish Theorem~\ref{thm:main}(e).

\begin{lemma}
There exists $(C,\,A)\in\A_{\rm N}\cap\setO_{\rm F}$ that is not
synchronizable with respect to $\G_{\geq 0}$.
\end{lemma}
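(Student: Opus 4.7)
The plan is to exhibit an explicit counterexample that exploits the fact that $\G_{\geq 0}$ permits completely disconnected interconnections. The idea is that if $\Gamma = 0$, then every coupling signal $z_i$ vanishes identically regardless of $L$, so the closed-loop dynamics reduce to the uncoupled open-loop dynamics $\dot{x}_i = Ax_i$. Whenever $A$ has a mode that does not decay, two systems started from different initial states will fail to synchronize.

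Concretely, I would take the one-dimensional example $A := 0 \in \Real^{1\times 1}$ and $C := 1 \in \Real^{1\times 1}$. Then $A$ is neutrally stable (its only eigenvalue is $0$, which lies on the imaginary axis and has a trivial Jordan block of size one), and $C$ is full column rank, so $(C,A) \in \A_{\rm N}\cap\setO_{\rm F}$. Next I would pick $p := 2$ systems and the zero interconnection $\Gamma := 0_{2\times 2}$, which trivially satisfies $\gamma_{ij} \geq 0$ for $i \neq j$ and $\gamma_{ii} = -\sum_{j\neq i}\gamma_{ij}$, and hence belongs to $\G_{\geq 0}$ (its graph has no edges, so it is disconnected, but $\G_{\geq 0}$ does not require connectedness).

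The verification is immediate: for any candidate gain $L \in \Real$, the coupling signals are
\begin{eqnarray*}
z_i = \sum_{j \neq i}\gamma_{ij}(y_j - y_i) = 0, \qquad i = 1,2,
\end{eqnarray*}
so that $u_i = Lz_i = 0$ and $\dot{x}_i = Ax_i = 0$. Hence $x_i(t) \equiv x_i(0)$ for all $t \geq 0$. Choosing $x_1(0) = 0$ and $x_2(0) = 1$ yields $|x_1(t) - x_2(t)| = 1$ for all $t$, so the solutions do not synchronize. Since $L$ was arbitrary, no linear feedback law can synchronize this pair with respect to $\G_{\geq 0}$.

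There is no real obstacle in this argument; the only point that needs a little care is checking that $\Gamma = 0$ legitimately qualifies as an interconnection under the definition given in Section 2 and that $A = 0$ counts as neutrally stable (the eigenvalue $0$ lies on the imaginary axis and appears in a $1\times 1$ Jordan block, which the definition explicitly allows). The example also makes transparent why the stronger hypothesis $\Gamma \in \G_{>0}$ is essential in statement (b): connectedness of the graph is what forces the coupling term to act as a genuine restoring force between the agents, and removing it leaves the undamped modes of $A$ free to preserve any initial mismatch.
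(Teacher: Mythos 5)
Your proof is correct and is essentially identical to the paper's own argument: both use $C=1$, $A=0$, and the zero interconnection $\Gamma=0_{2\times 2}\in\G_{\geq 0}$ to show that the decoupled systems preserve any initial mismatch regardless of $L$. No issues.
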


\begin{proof}
Suppose not. Then for $C=1$ and $A=0$ (note that $(C,\,A)\in\A_{\rm
N}\cap\setO_{\rm F}$) there exists $L\in\Real$ such that for all
$\Gamma\in\G_{\geq 0}$ solutions of systems~\eqref{eqn:system} with
$u_{i}=Lz_{i}$ synchronize for all initial conditions. However for
$\Gamma:=0_{2\times 2}\in\G_{\geq 0}$ we have $\dot{x}_{i}=0$, for
$i=1,\,2$, regardless of $L$. Therefore solutions $x_{i}(\cdot)$ do
not synchronize for $x_{1}(0)\neq x_{2}(0)$. The result follows by
contradiction.
\end{proof}

Next result establishes Theorem~\ref{thm:main}(f). It emphasizes the
nonsufficiency of partial-state coupling for synchronizability of
critically unstable systems with respect to set all connected
interconnections. As shown by Lemma~\ref{lem:c} this lost
synchronizability can be restored under full-state coupling.
\begin{lemma}
There exists $(C,\,A)\in\A_{\rm J}\cap\setO_{\rm P}$ that is not
synchronizable with respect to $\G_{> 0}$.
\end{lemma}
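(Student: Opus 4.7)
The plan is to exhibit a single pair $(C,A)\in\A_{\rm J}\cap\setO_{\rm P}$ that defeats every candidate feedback law. The natural counterexample is the double integrator $A=\left[\begin{smallmatrix}0 & 1\\ 0 & 0\end{smallmatrix}\right]$ with output $C=\left[\begin{smallmatrix}1 & 0\end{smallmatrix}\right]$, which lies in $\A_{\rm J}$ (a Jordan block of size $2$ at the origin) and is observable, hence in $\setO_{\rm P}$. I argue by contradiction: suppose some $L=[\ell_{1}\ \ell_{2}]^{T}\in\Real^{2}$ synchronizes every $\Gamma\in\G_{>0}$.

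First I reduce synchronizability to a pointwise Hurwitz condition. Stacking states gives $\dot\ex=(I_{p}\otimes A+\Gamma\otimes LC)\ex$, and bringing $\Gamma$ into the upper-triangular form $V^{-1}\Gamma V$ with $\one$ as the first column of $V$ (exactly as in the proof of statement (d)) decouples the array into an $A$-block on the consensus direction plus blocks $A+\lambda LC$ indexed by the nonzero eigenvalues $\lambda$ of $\Gamma$. Synchronization for all initial conditions therefore forces $A+\lambda LC$ to be Hurwitz for every nonzero eigenvalue $\lambda$ of every connected $\Gamma$. Testing this requirement against the symmetric two-node interconnection $\Gamma=\left[\begin{smallmatrix}-1 & 1\\ 1 & -1\end{smallmatrix}\right]$ (nonzero eigenvalue $-2$) yields the real polynomial $s^{2}+2\ell_{1}s+2\ell_{2}$, which is Hurwitz only if $\ell_{1}>0$ and $\ell_{2}>0$.

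The crux is then to exhibit a connected $\Gamma$ whose spectrum strays far in the imaginary direction while clinging to the imaginary axis. I would use the directed $p$-cycle with uniform weight $c_{p}$, i.e., $\gamma_{i,i+1}=c_{p}$ (indices mod $p$) and $\gamma_{ii}=-c_{p}$; its nonzero eigenvalues are $\lambda_{k}=c_{p}(e^{2\pi ik/p}-1)$ for $k=1,\ldots,p-1$. In particular $\mathrm{Re}(\lambda_{1})=-c_{p}(1-\cos(2\pi/p))$ and $\mathrm{Im}(\lambda_{1})=c_{p}\sin(2\pi/p)$, so choosing $c_{p}:=p^{3/2}$ makes $\epsilon_{p}:=|\mathrm{Re}(\lambda_{1})|\sim 2\pi^{2}/\sqrt{p}\to 0$ while $|\lambda_{1}|\sim 2\pi\sqrt{p}\to\infty$ as $p\to\infty$.

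The finish is an asymptotic analysis of the quadratic $s^{2}-\lambda\ell_{1}s-\lambda\ell_{2}=0$ at $\lambda=\lambda_{1}$. Writing $\sqrt{\lambda^{2}\ell_{1}^{2}+4\lambda\ell_{2}}=\lambda\ell_{1}\sqrt{1+4\ell_{2}/(\lambda\ell_{1}^{2})}$ with the appropriate branch, one root is $s_{+}=\lambda\ell_{1}+\ell_{2}/\ell_{1}+O(|\lambda|^{-1})$, so that $\mathrm{Re}(s_{+})=-\epsilon_{p}\ell_{1}+\ell_{2}/\ell_{1}+o(1)$. Since $\ell_{2}/\ell_{1}>0$ is fixed while $\epsilon_{p}\to 0$, for all $p$ large enough $\mathrm{Re}(s_{+})>0$, contradicting Hurwitzness of $A+\lambda_{1}LC$. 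The main technical nuisance I anticipate is controlling the branch of $\sqrt{\cdot}$ and the remainder uniformly; a cleaner alternative is to observe that at the boundary point $\lambda=i\omega$ the polynomial $s^{2}-i\omega\ell_{1}s-i\omega\ell_{2}$ already has a root strictly in the open right half plane for large $\omega$ (one computes $s\sim i\omega\ell_{1}+\ell_{2}/\ell_{1}$), and then to transport this instability to the admissible eigenvalue $\lambda_{1}$ by continuity of polynomial roots.
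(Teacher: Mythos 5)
Your proof is correct and follows the same skeleton as the paper's: the same counterexample pair (double integrator with position output, which is observable and hence in $\setO_{\rm P}$), the same reduction of synchronizability to the requirement that $A+\lambda LC$ be Hurwitz for every nonzero eigenvalue $\lambda$ of every connected $\Gamma$, the same two-node test forcing $\ell_{1},\ell_{2}>0$, and the same family of directed cycles to manufacture offending eigenvalues. The one place you diverge is the final computation. The paper exploits the homogeneity $\lambda\in\setL\Rightarrow r\lambda\in\setL$ to normalize $|\lambda|=1$ and shows, via a chain of trigonometric inequalities, that a whole wedge of arguments $\theta\in(\pi/2,\bar\theta]$ is forbidden to $\setL$; the unit-weight $p$-cycle's eigenvalue $e^{j2\pi/p}-1$ then lands in that wedge for large $p$. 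You instead scale the cycle weights so that the relevant eigenvalue has modulus tending to infinity while its real part tends to $0^{-}$, and read off the contradiction from the expansion $s_{+}=\lambda\ell_{1}+\ell_{2}/\ell_{1}+O(|\lambda|^{-1})$, whose real part converges to $\ell_{2}/\ell_{1}>0$. Your version is arguably cleaner, and the branch issue you flag is harmless: whichever square root of the discriminant you call $w$, the root set is $\{(\lambda\ell_{1}\pm w)/2\}$, and for $|4\ell_{2}/(\lambda\ell_{1}^{2})|<1$ the principal-branch expansion of $\sqrt{1+z}$ is uniform, so one of the two roots obeys your asymptotics. Both arguments ultimately rest on the same structural fact: connected interconnections admit nonzero eigenvalues with arbitrarily small ratio $|\mathrm{Re}\,\lambda|/|\lambda|$, and the double integrator under position-only coupling cannot tolerate such eigenvalues.
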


\begin{proof}
Suppose not. Let 
\begin{eqnarray*}
C:=[1\ \ 0]\ ,\qquad
A:=\left[\!\!
\begin{array}{rr}
0&1\\0&0
\end{array}\!\!
\right]
\end{eqnarray*}
Note that $(C,\,A)\in\A_{\rm J}\cap\setO_{\rm P}$. Then there exists 
$L\in\Real^{2\times 1}$ 
\begin{eqnarray*}
L:=\left[\!\!
\begin{array}{c}
\ell\\
\rho
\end{array}
\!\!
\right]
\end{eqnarray*}
such that for all $\Gamma\in\G_{> 0}$
solutions of systems~\eqref{eqn:system} with $u_{i}=Lz_{i}$
synchronize for all initial conditions. That is equivalent, by
\cite{pecora98}, to that $A+\lambda LC$ is Hurwitz for all 
$\lambda\in\{\gamma\in\Complex:0\neq\gamma\ 
\mbox{is an eigenvalue of some}\ \Gamma\in\G_{>0}\}=:\setL$. 
Note that 
\begin{eqnarray*}
\Gamma\in\G_{>0} \implies r\Gamma\in\G_{>0}\quad \forall r>0 
\end{eqnarray*}
whence 
\begin{eqnarray}\label{eqn:implies1}
\lambda\in\setL \implies r\lambda\in\setL\quad \forall r>0\,.  
\end{eqnarray}
Observe that any $\Gamma\in\G_{>0}\cap\Real^{2\times 2}$ has a
negative (real) eigenvalue. By \eqref{eqn:implies1} therefore $\setL$ includes
negative real line. Then a simple calculation shows that both
$\ell$ and $\rho$ need be strictly positive which lets us, thanks to
\eqref{eqn:implies1}, take $\ell=2$ without loss of
generality. Thence $A+\lambda LC$ reduces to the following matrix 
\begin{eqnarray}\label{eqn:matrix}
\left[\!\!
\begin{array}{cc}
2\lambda & 1\\
\lambda\rho & 0
\end{array}\!\!
\right]
\end{eqnarray}
which must be Hurwitz for all $\lambda\in\setL$. The roots of the
characteristic equation of the matrix in \eqref{eqn:matrix} is
\begin{eqnarray*}
s_{1,2}=\lambda\pm\sqrt{\lambda^{2}+\lambda\rho}\,.
\end{eqnarray*}
Then for $\theta\in(\pi/2,\,3\pi/2)$ we can write
\begin{eqnarray*}
{\rm Re}(s_{1,2})\big|_{\lambda=e^{j\theta}}
&=&\cos\theta\pm\frac{1}{\sqrt{2}}\sqrt{\cos2\theta+\rho\cos\theta
+\sqrt{(\cos2\theta+\rho\cos\theta)^{2}+(\sin2\theta+\rho\sin\theta)^{2}}}\\
&=&\cos\theta\pm\frac{1}{\sqrt{2}}\sqrt{\cos2\theta+\rho\cos\theta
+\sqrt{1+\rho^{2}+2\rho\cos\theta}}\,.
\end{eqnarray*}
Observe that we can choose $\bar\theta\in(\pi/2,\,\pi]$ and $m>0$ such that 
the following inequalities are satisfied
\begin{eqnarray*}
1+\frac{9\rho^{2}}{m^{2}} &\leq& \sqrt{1+\rho^{2}+2\rho\cos\theta}\\
-\rho\cos\theta &\leq& \frac{\rho^{2}}{m^{2}}\\
-\cos\theta &\leq& \frac{\rho}{m}
\end{eqnarray*}
for all $\theta\in(\pi/2,\,\bar\theta]$. Then for all $\theta\in(\pi/2,\,\bar\theta]$ 
one of the roots, say $s_{1}$, satisfies
\begin{eqnarray*}
{\rm Re}(s_{1})&=& \cos\theta+\frac{1}{\sqrt{2}}\sqrt{\cos2\theta+\rho\cos\theta
+\sqrt{1+\rho^{2}+2\rho\cos\theta}}\\
&\geq& \cos\theta+\frac{1}{\sqrt{2}}\sqrt{\cos2\theta-\frac{\rho^{2}}{m^{2}}
+1+\frac{9\rho^{2}}{m^{2}}}\\
&=& \cos\theta+\frac{1}{\sqrt{2}}\sqrt{2\cos^{2}\theta+\frac{8\rho^{2}}{m^{2}}}\\
&\geq& \cos\theta+\frac{2\rho}{m}\\
&\geq& \frac{\rho}{m}\\
&>& 0
\end{eqnarray*}
which implies, since no eigenvalue of matrix \eqref{eqn:matrix} can
have positive real part, $e^{j\theta}\notin\setL$ for
$\theta\in(\pi/2,\,\bar\theta]$. Due to \eqref{eqn:implies1} therefore the wedge 
$\W:=\{re^{j\theta}:\theta\in(\pi/2,\,\bar\theta],\,r>0\}$ and $\setL$ are disjoint, i.e.
\begin{eqnarray}\label{eqn:disjoint}
\W\cap\setL=\emptyset\,.
\end{eqnarray}
For $p\in\Natural_{\geq 1}$ let $\Gamma_{p}\in\Real^{p\times p}$ be
\begin{eqnarray*}
\Gamma_{p}:=\left[\!\!
\begin{array}{rrrrr}
-1&1&0&\cdots&0\\
0&-1&1&\cdots&0\\
\vdots&\vdots&\vdots&\ddots&\vdots\\
0&0&0&\cdots&1\\
1&0&0&\cdots&-1
\end{array}\!\!
\right]
\end{eqnarray*}
Observe for all $p$ that we have $\Gamma_{p}\in\G_{>0}$ and that
$\lambda_{p}:=e^{j\frac{2\pi}{p}}-1$ is an eigenvalue of $\Gamma_{p}$
with corresponding eigenvector
\begin{eqnarray*}
v_{p}=\left[\!\!
\begin{array}{c}
(\lambda_{p}+1)^{1}\\
(\lambda_{p}+1)^{2}\\
\vdots\\
(\lambda_{p}+1)^{p}
\end{array}
\!\!
\right]
\end{eqnarray*}
Note that for $p$ large enough $\lambda_{p}\in\W$ which contradicts
\eqref{eqn:disjoint} for $\Gamma_{p}\in\G_{>0}$. Hence the result.
\end{proof}

Below we establish Theorem~\ref{thm:main}(g).
\begin{lemma}
There exists $(C,\,A)\in\setO_{\rm F}$ that is not synchronizable with
respect to $\G_{> 0}$.
\end{lemma}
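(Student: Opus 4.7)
The plan is to exhibit a very small concrete counterexample and derive a contradiction from the spectral characterization of synchronization used earlier in Theorem~\ref{thm:main}(f). The critical observation is that $\setO_{\rm F}$ only constrains the output matrix $C$: it says nothing about the location of the eigenvalues of $A$. In particular, one is free to choose $A$ with an eigenvalue having strictly positive real part, and for such an $A$ the interconnections with arbitrarily weak coupling will defeat any fixed feedback gain $L$.

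Concretely, I would take the scalar pair $C:=1$, $A:=1$, which obviously lies in $\setO_{\rm F}$. Suppose for contradiction that some $L\in\Real$ synchronizes this pair for every $\Gamma\in\G_{>0}$. Invoking the spectral criterion from \cite{pecora98} (the same one used in the proof of Theorem~\ref{thm:main}(f)), this forces $1+\lambda L$ to have strictly negative real part for every nonzero eigenvalue $\lambda$ of every $\Gamma\in\G_{>0}$.

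Now I would produce a family of connected interconnections whose nonzero eigenvalues shrink to $0$. The simplest choice is
\begin{eqnarray*}
\Gamma_{\varepsilon}:=\varepsilon\left[\!\!
\begin{array}{rr}
-1 & 1\\
1 & -1
\end{array}\!\!
\right]\,,\qquad\varepsilon>0\,,
\end{eqnarray*}
which is connected (both off-diagonal entries are positive) and has nonzero eigenvalue $-2\varepsilon$. The Hurwitz requirement therefore reads $1-2\varepsilon L<0$ for \emph{every} $\varepsilon>0$. Letting $\varepsilon\downarrow 0$ yields $1\le 0$, the desired contradiction.

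There is essentially no hard step here; the content is in recognizing that $\G_{>0}$ does not come with a uniform lower bound on $|{\rm Re}(\lambda_{2}(\Gamma))|$ (unlike $\G_{\geq\delta}$), so any $L$ tuned to handle weak coupling must be arbitrarily large, which an unstable $A$ will still outpace. This is precisely the gap that statement (d) fills by assuming a priori knowledge of $\delta$, and the present statement shows that dropping $\delta$ breaks synchronizability even under the strongest observation hypothesis $\setO_{\rm F}$.
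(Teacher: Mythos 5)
Your proposal is correct and follows essentially the same route as the paper: the same scalar pair $C=1$, $A=1$, and a two-node connected interconnection whose coupling strength $\varepsilon$ is sent to zero so that the unstable drift defeats any fixed $L$. The paper merely uses a directed two-node graph and computes the error dynamics $\dot{x}_{1}-\dot{x}_{2}=(1-\varepsilon L)(x_{1}-x_{2})$ directly instead of invoking the spectral criterion of \cite{pecora98}, which changes nothing essential.
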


\begin{proof}
Suppose not. Then for $C=1$ and $A=1$ (note that $(C,\,A)\in\setO_{\rm F}$) there exists $L\in\Real$ such that for all
$\Gamma\in\G_{> 0}$ solutions of systems~\eqref{eqn:system} with
$u_{i}=Lz_{i}$ synchronize for all initial conditions. Choose $\varepsilon>0$
such that $1-\varepsilon L>0$. Then let
\begin{eqnarray*}
\Gamma:=\left[\!\!
\begin{array}{rr}
-\varepsilon&\varepsilon\\
0&0
\end{array}\!\!
\right] 
\end{eqnarray*}
Observe that $\Gamma\in\G_{>0}$. Consider systems~\eqref{eqn:system}
under interconnection $\Gamma$ with $u_{i}=Lz_{i}$, for $i=1,\,2$. We
can write $\dot{x}_{1}-\dot{x}_{2}=(1-\varepsilon
L)(x_{1}-x_{2})$. Therefore solutions $x_{i}(\cdot)$ do not
synchronize unless $x_{1}(0)= x_{2}(0)$. Hence the result by
contradiction.
\end{proof}

Below lemma yields Theorem~\ref{thm:main}(h).
\begin{lemma}
For no $\delta>0$ pair $(0,\,0)$ is synchronizable with
respect to $\G_{\geq \delta}$.
\end{lemma}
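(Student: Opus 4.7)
The plan is a direct argument by contradiction that exploits the degeneracy of $C=0$: with a zero output matrix, no information can flow between agents regardless of $L$ or of how strongly they are coupled, so the open-loop dynamics $\dot{x}_i=Ax_i$ decide everything. Since here $A=0$ as well, any two agents started from distinct initial states stay apart forever.

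Concretely, fix an arbitrary $\delta>0$ and suppose, toward a contradiction, that $(0,0)$ is synchronizable with respect to $\G_{\geq\delta}$; let $L$ be a feedback law witnessing this. First I would observe that with $C=0$ we have $y_i(t)\equiv 0$ for every $i$, so by \eqref{eqn:c} the coupling signal $z_i(t)\equiv 0$ for every interconnection $\Gamma$ and every trajectory. Consequently $u_i=Lz_i\equiv 0$ no matter what $L$ is, and with $A=0$ the closed loop collapses to $\dot{x}_i=0$, giving $x_i(t)=x_i(0)$ for all $t\geq 0$.

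Next I would exhibit a single interconnection in $\G_{\geq\delta}$ on which this closed loop manifestly fails to synchronize: take $p=2$ and
\begin{eqnarray*}
\Gamma:=\delta\left[\!\!\begin{array}{rr} -1 & 1\\ 1 & -1\end{array}\!\!\right],
\end{eqnarray*}
whose graph is connected and whose unique nonzero eigenvalue is $-2\delta$, so $\Gamma\in\G_{\geq\delta}$. Choose initial conditions with $x_1(0)\neq x_2(0)$; by the previous paragraph $x_i(t)=x_i(0)$ for all $t$, so $|x_1(t)-x_2(t)|$ does not tend to zero. This contradicts the assumed synchronizability and therefore proves the lemma.

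Since $\delta>0$ was arbitrary, there is no obstacle of substance here; the entire argument is a bookkeeping exercise once one notices that $C=0$ kills the feedback path unconditionally. The only thing worth stating carefully is the choice of $\Gamma$ so that membership in $\G_{\geq\delta}$ is visibly met for the given $\delta$, which the scaled two-node Laplacian above accomplishes.
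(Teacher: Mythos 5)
Your proof is correct, and the paper in fact omits any proof of this lemma (it is listed among the statements deemed ``straightforward if not obvious''); your argument is exactly the intended one: $C=0$ forces $z_i\equiv 0$ hence $u_i\equiv 0$ for any $L$, and with $A=0$ the states are frozen, so the explicit $\Gamma\in\G_{\geq\delta}$ you exhibit defeats synchronization for any distinct initial conditions. Nothing further is needed.
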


\section{Dual problem}\label{sec:dualproblem}

For a given interconnection $\Gamma\in\Real^{p\times p}$, consider the following 
array of $p$ identical linear systems
\begin{subeqnarray}\label{eqn:systemdual}
{\dot{x}}_{i} &=& Ax_{i}+Bu_{i} \\
 z_{i} &=& \displaystyle \sum_{j\neq i}\gamma_{ij}(x_{j}-x_{i})
\end{subeqnarray}
where $x_{i}\in\Real^{n}$, $u_{i}\in\Real^{m}$, and
$z_{i}\in\Real^{n}$ for $i=1,\,2,\,\ldots,\,p$. Matrices $A$ and $B$
are of proper dimensions. The duality between observability and
controllability for linear systems readily yields the following theorem by
which the results in this paper can be extended for the
synchronization of the arrays of coupled linear systems depicted by
\eqref{eqn:systemdual}.   

\begin{theorem}
Let $A\in\Real^{n\times n}$, $B\in\Real^{n\times m}$, and set of
interconnections $\setS$ be such that $(B^{T},\,A^{T})$ is
synchronizable with respect to $\setS$. Then and only then there
exists a linear feedback law $K\in\Real^{m\times n}$ such that for
each $\Gamma\in\setS$ solutions $x_{i}(\cdot)$ of
array~\eqref{eqn:systemdual} with $u_{i}=Kz_{i}$ synchronize for all
initial conditions.
\end{theorem}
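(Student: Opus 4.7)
The goal is to reduce the claim to a spectral condition that is invariant under transposition, so that a synchronizing gain $L$ for $(B^T,A^T)$ converts directly into $K=L^T$ for the dual array.

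First I would stack the states of \eqref{eqn:systemdual} under $u_i=Kz_i$ to obtain $\dot{\ex}=(I_p\otimes A+\Gamma\otimes BK)\ex$ and then pass to the Schur form of $\Gamma$ exactly as in the proof of Theorem~\ref{thm:main}(d). The resulting block upper triangular decomposition shows that the non-consensus modes are driven by a matrix whose spectrum is the union of the spectra of $A+\lambda_i BK$, where $\lambda_2,\ldots,\lambda_p$ run over the nonzero eigenvalues of $\Gamma$. Writing $\setL(\setS):=\{\lambda\in\Complex:\lambda\neq 0,\ \lambda\in\mathrm{spec}(\Gamma)\text{ for some }\Gamma\in\setS\}$, one obtains the characterization that the dual array is synchronizable with respect to $\setS$ iff there is a $K$ with $A+\lambda BK$ Hurwitz for every $\lambda\in\setL(\setS)$. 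Running the identical derivation on \eqref{eqn:system} with $(C,A)$ replaced by $(B^T,A^T)$---this is the same equivalence borrowed from \cite{pecora98} in the proof of statement (f)---yields $(B^T,A^T)$ synchronizable with respect to $\setS$ iff there is an $L$ with $A^T+\lambda LB^T$ Hurwitz for every $\lambda\in\setL(\setS)$.

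Second, I would close by noting that $A^T+\lambda LB^T$ and its transpose $A+\lambda BL^T$ share the same characteristic polynomial, so one is Hurwitz precisely when the other is. The choice $K:=L^T$ therefore turns any synchronizing $L$ for $(B^T,A^T)$ into a synchronizing $K$ for the dual array, and $L:=K^T$ executes the reverse conversion, producing the claimed equivalence.

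The main obstacle I anticipate is the ``only if'' direction of the spectral characterization: one must argue that if the stacked trajectory synchronizes for every initial condition then the non-consensus block is genuinely Hurwitz rather than merely marginally stable, which in turn forces each $A+\lambda_i BK$ to be Hurwitz. Once this piece is in place, the transposition step introduces no new difficulty and the proof concludes.
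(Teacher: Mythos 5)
Your overall strategy is the right one, and in fact the paper offers no written proof at all for this theorem --- it simply asserts that observability/controllability duality ``readily yields'' it --- so reducing both arrays to a transposition-invariant spectral test and setting $K=L^{T}$ is precisely the intended argument. The worry you flag about the ``only if'' direction is not the real difficulty: the disagreement coordinates of real initial conditions complex-span the whole disagreement space, so if every real trajectory of the (autonomous, linear) disagreement dynamics decays, the corresponding matrix is Hurwitz, not merely marginally stable.

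The genuine gap is elsewhere: your intermediate characterization ``synchronizable with respect to $\setS$ iff $A+\lambda BK$ is Hurwitz for every $\lambda\in\setL(\setS)$,'' with $\setL(\setS)$ the \emph{nonzero} eigenvalues, is borrowed from the proof of statement (f), where every $\Gamma\in\G_{>0}$ is connected and hence has $0$ as a simple eigenvalue. The theorem you are proving, however, is stated for an \emph{arbitrary} set $\setS$ of interconnections, and statements (a) and (e) show that sets containing disconnected graphs (e.g.\ $\G_{\geq 0}$) are genuinely in scope. For $\Gamma=0_{2\times 2}$ one has $\setL(\{\Gamma\})=\emptyset$, so your condition is vacuously satisfied by every $K$, yet the decoupled systems $\dot{x}_i=Ax_i$ synchronize only if $A$ is Hurwitz; your lemma is therefore false as stated. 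The repair is to characterize synchronization via the quotient dynamics modulo the consensus subspace $\one\otimes\Real^{n}$ (which is invariant since $\Gamma\one=0$): synchronization for all initial conditions holds iff the quotient matrix is Hurwitz, and its spectrum is $\bigcup_{\mu}\mathrm{spec}(A+\mu BK)$ where $\mu$ now ranges over the \emph{multiset} $\mathrm{spec}(\Gamma)$ with one copy of $0$ deleted --- so repeated zero eigenvalues correctly force $A$ itself to be Hurwitz. Since this multiset is invariant under $\Gamma\mapsto\Gamma^{T}$ and $\mathrm{spec}(A+\mu BL^{T})=\mathrm{spec}(A^{T}+\mu LB^{T})$, the substitution $K=L^{T}$ still closes the argument, now for every admissible $\setS$.
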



\section{Conclusion}\label{sec:conclusion}
For arrays of identical output-coupled linear systems we have
investigated the sufficiency of certain conditions on system matrix
$A$ and output matrix $C$ for existence of a feedback
law under which the systems synchronize for all coupling
configurations with connected graphs. Namely, we have filled in the
previously missing pieces (the boxes indicated with a question mark)
of the chart given in Fig.~\ref{fig:chart}. In addition, for each
case corresponding to a box with ``\checkmark'' we have designed a
synchronizing feedback law.
\begin{figure}[h]
\begin{center}
\includegraphics[scale=0.5]{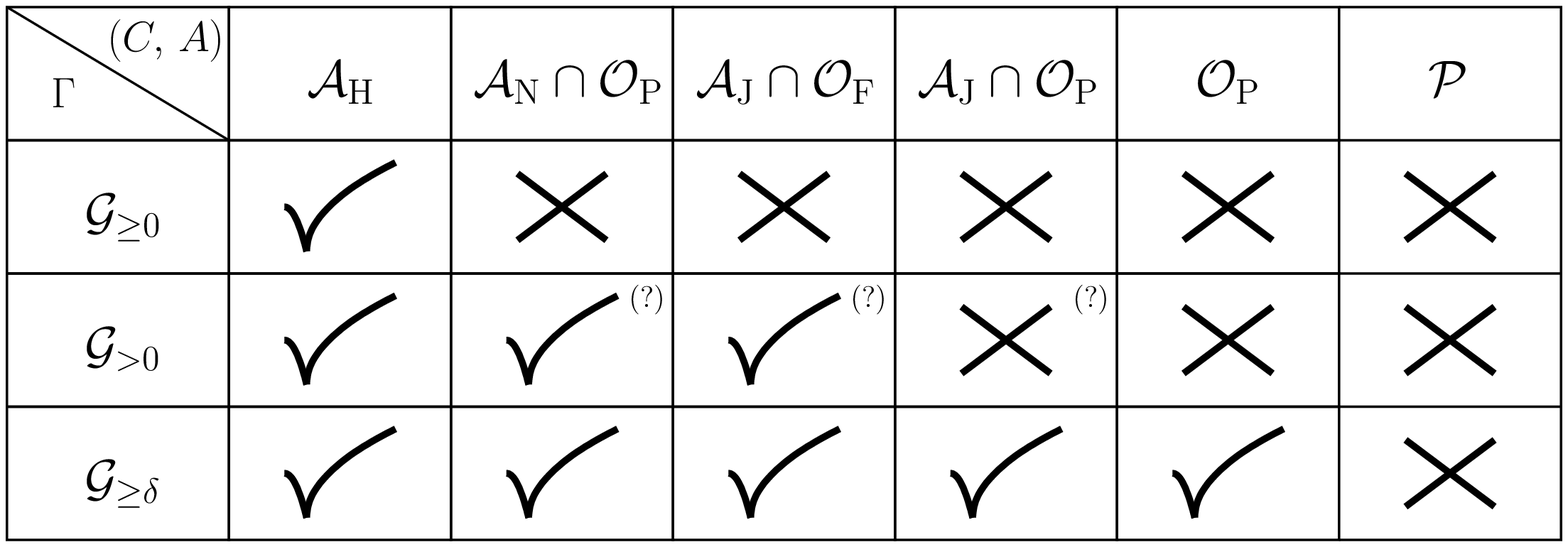}
\caption{Sufficiency of certain conditions on $(C,\,A)$ for synchronizability  
with respect to different sets of interconnections
$\Gamma$.}\label{fig:chart}
\end{center}
\end{figure}

\bibliographystyle{plain}
\bibliography{references} 
\end{document}